\crefname{hypothesis}{Hypothesis}{Hypotheses}
\crefname{fact}{Fact}{Facts}
\title{An Example Article\thanks{Submitted to the editors DATE.
\funding{This work was funded by the Fog Research Institute under contract no.~FRI-454.}}}
\title{Learning dynamically inspired bases for Koopman and transfer operator approximation}
\author{Gary Froyland\thanks{School of Mathematics and Statistics, UNSW Sydney, Sydney NSW 2052, Australia.
(\email{g.froyland@unsw.edu.au}, \email{k.kuhl\_oliveira@unsw.edu.au}).}
\and Kevin Kühl\footnotemark[1]}
\begin{document}

\maketitle

\begin{abstract}
Transfer and Koopman operator methods offer a framework for representing complex, nonlinear dynamical systems via linear transformations, enabling a deeper understanding of the underlying dynamics. 
The spectra of these operators provide important insights into system predictability and emergent behaviour, although efficiently estimating them from data can be challenging.
We approach this issue through the lens of general operator and representational learning, in which we approximate these linear operators using efficient finite-dimensional representations. Specifically, we machine-learn orthonormal basis functions that are dynamically tailored to the system. This learned basis provides a particularly accurate approximation of the operator's action and enables efficient recovery of eigenfunctions and invariant measures.
We illustrate our approach with examples that showcase the retrieval of spectral properties from the estimated operator, and emphasise the dynamically adaptive quality of the machine-learned basis.
\end{abstract}

\begin{keywords}
Koopman Operator, Transfer Operator, Spectrum
\end{keywords}

\begin{MSCcodes}
47A15, 37C30, 47A58, 68T07
\end{MSCcodes}

\section{Introduction}
Nonlinear dynamical systems emerge in a variety of domains, including biology, ecology, physics, and engineering.
Understanding their behaviour is critical for prediction and control, but analysing nonlinear systems is difficult due to their complex and frequently high-dimensional chaotic behaviour. A promising approach is to study a \textit{nonlinear} system via an associated \textit{linear} operator acting on observables (\emph{nonlinear functions} of the state). In this operator-theoretic framework, the nonlinear evolution in state space is lifted to a linear (but infinite-dimensional) evolution in a function space. For example, given a discrete dynamical system $x_{n+1} = T(x_n)$ with state space $X$, one can define the transfer operator $\mathcal{L}$ acting on a function $g: X \to \mathbb{R}$ by composition:
\begin{equation*}
    \mathcal{L}g \coloneq g\circ T^{-1}.
\end{equation*}

The operator $\mathcal{L}$ is linear even if $T$ is nonlinear. By analysing $\mathcal{L}$, one can leverage linear techniques to understand nonlinear dynamics \cite{Froyland2021, Mezi2013, pmlr-v44-giannakis15, Eisner2015, Klus2018}. The trade-off is that $\mathcal{L}$ typically acts on infinite-dimensional function spaces, and to study $\mathcal{L}$ in practice we must work with finite-dimensional approximations.

A major challenge in typical applications is that we do not know \textit{a priori} which observables or subspaces will yield a good finite-dimensional representation of $\mathcal{L}$. Many traditional approaches assume a predefined set of basis functions --- e.g.\ monomials, Fourier modes, or the library of observables supplied to Extended Dynamic Mode Decomposition (EDMD) \cite{Williams2015} --- that may not be well suited to the system at hand. For example, data-driven approximations of the Koopman operator, such as DMD \cite{SCHMID_2010, dmd_mezic} and its variations, depend on the user supplying observables that span a subspace that is adapted to the dynamics of $\mathcal{L}$. If key observables are omitted, important dynamics can be missed, and poorly chosen subspaces can lead to significant estimation errors. Consequently, identifying coordinate transformations or feature spaces in which a nonlinear system behaves linearly remains an active and widely researched topic \cite{koopman31, amini_carleman_linearization, nonlinear_pde_linear_pde_coordinate, Lee2022}.

Several approaches attempt to learn such coordinate transformations. Brunton \textit{et al.}\ consider a sparse dictionary of functions with fixed form to obtain a finite Koopman-invariant subspace for control \cite{Brunton2016}. Starting from a specially structured dictionary, Johnson \textit{et al.}\ \cite{Yeung2022} are able to construct invariant subspaces and, in a companion work, propose a test for their invariance \cite{Johnson2025}.
The ResDMD framework of Colbrook and Townsend \cite{CT23} uses residuals to assess eigenspace invariance. Turning to the use of neural networks, Li \emph{et al.}\ \cite{BolltQianxiao2017} employ a neural dictionary (neural networks as functions), updating the basis and Koopman matrix separately. As a result, operator errors never backpropagate into the dictionary, and the learned observables are not adapted to the underlying dynamics.

Unifying both operator and embedding learning, Otto and Rowley's linearly recurrent autoencoder network (LRAN) forces an encoded version of the state space to evolve linearly in a Koopman-invariant subspace \cite{rowleyOtto2019}. However, the loss is skewed towards a good reconstruction of the state space from its encoded version. The network is tuned to embed the state space compactly rather than to learn a function space that is adapted to the underlying dynamics for accurate spectral reconstruction.
Other encoder-decoder schemes, such as Yeung \textit{et al.}\ \cite{Yeung2019} and control-oriented work by \cite{ShiMeng2022} follow a similar approach. Takeishi \textit{et al.}\ \cite{takeishi_invariant_subspaces} also learn observables by minimizing a one-step residual; at the global optimum these observables span an invariant subspace. Using a modified autoencoder structure, Lusch, Brunton, and Kutz consider the spectrum of the operator and learn Koopman eigenfunctions \cite{lusch_deep_learning_embeddings}, but tuning the network parameters is not straightforward.

In parallel, there has been significant progress in learning nonlinear operators (nonlinear maps between function spaces) with neural networks. This work is built upon the universal approximation theorem for operators \cite{universal_approximation_chen_chen, kovachkiNeuralOperatorLearning2024}, which extends the classic universal approximation property of neural nets from finite-dimensional functions to infinite-dimensional functionals.
Lu \textit{et al.}\ \cite{Lu2021} have proposed the Deep Operator Network (DeepONet) architecture. DeepONet consists of two subnetworks (a branch network that encodes the input function via evaluations at a fixed set of sensor points, and a trunk network that encodes the location where the output is evaluated) 
and can learn nonlinear operator mappings. Another example is the Basis Operator Network (BasisONet) of Hua and Lu \cite{HUA202321}, which uses a neural-network-based dual autoencoder setting to learn a lower-dimensional representation (basis) for functions in the input and output function spaces. Over the past few years, different architectures have been proposed for various scenarios, including Fourier Neural Operators \cite{fourier_neural_operators}, and PCA-Net \cite{pca_net}.
A comprehensive review of operator learning techniques may be found in \cite{mathematical_guide_operator_learning}, including insightful parallels to the matrix recovery problem.

In this current work we take the BasisONet architecture as a starting point, and exploit properties of Perron--Frobenius and Koopman operators to reduce the number of encoding blocks and produce a \textit{single basis of functions that span a dynamically adapted subspace}.
Our Single Autoencoder Basis Operator Network (SABON) learns a basis with no predefined functional form and trains it with a relative $L^2$ loss. For every training observable we minimise a residual measuring how well a finite matrix represents $\mathcal L$ on the subspace spanned by the learned basis.
Because of the inner-product structure of our projection block, gradient descent methods naturally push the basis toward orthogonality.
Orthogonality also improves the conditioning of the loss landscape, making training more stable. An optional sparsity penalty encourages local support (Example 1). The combined learning of both the operator estimate and the basis functions produces a basis that is adapted to the underlying dynamics (Examples 2 and 3).

An outline of the paper is as follows. \Cref{sec:methodology} introduces the Single Autoencoder Basis Operator Network (SABON) and proves a universal-approximation theorem for the presented setting.
\Cref{sec:examples} first validates SABON on a circle rotation map, and shows that spectral properties of the transfer operator can be recovered from the learned basis and approximator.
Next, we apply SABON to two nonlinear chaotic maps of the 2-torus and demonstrate that the learned basis adapts to the anisotropic dynamics in both cases, and outperforms equal-cardinality Fourier bases in approximating spectral quantities such as the Sinai--Ruelle--Bowen (SRB) measure.
The final section discusses our main findings and sketches directions for future work.

\section{Methodology}
\label{sec:methodology}
Let $X\subset\mathbb R^d$ denote a compact, metrizable space of intrinsic dimension $d' \le d$, where elements of $X$ are parameterised by $d$-tuples of real numbers. The set $X$ is the domain of our nonlinear dynamics, which are generated by the map $T\colon X\to X$. We assume that $\mathcal L$ is continuous; sufficient conditions on $T$ ensuring this continuity are given in \cref{remark_theorem}.
 Let $\{x_i\}_{i=1, \dots, n}\subset X$ be a set of points representing a discretization of the state space. 
 We assume that in the large-data limit these points are distributed according to a probability measure $\nu$ on $X$, and in what follows we consider the space of square-integrable real-valued functions $L^2(X,\nu)$, which we write as $L^2(X)$ for brevity.
 In our numerical experiments we will take $\nu$ to be normalised Lebesgue measure.
 An observable $g\in L^2(X)$ is represented by its values on the discrete points $\{x_i\}_{i=1, \dots, n}$; that is $[g(x_1), g(x_2),\ldots,g(x_n)]^\top$. The action of the transfer operator produces a new function $\mathcal{L}(g)$, whose values on the grid are $\big[(\mathcal{L}g)(x_1),\ldots,(\mathcal{L}g)(x_n)\big]^\top$. If $V \subset L^2(X)$ is a finite-dimensional invariant subspace spanned by basis functions ${\phi_1,\dots,\phi_N}$, then the restriction of $\mathcal{L}$ to $V$ can be represented by an $N\times N$ matrix $L$ such that $\mathcal{L}\phi_j = \sum_{k=1}^N L_{kj}\phi_k$. 
 If $V$ is not $\mathcal{L}$-invariant, a finite-rank approximation $\hat{\mathcal{L}}$ is often obtained by first applying $\mathcal{L}$ to the basis elements of $V$ and then projecting the resulting functions back into $V$. 
 The quality of this approximation depends on the choice of subspace $V$.
 Two important contributors to approximation accuracy are the richness of the space $V$ and the error incurred due to projection into $V$.
 In our learning procedure, a rich $V$ will be provided by a rich training set and by the orthogonality property of our learned basis, while our architecture and loss function explicitly incorporates the projection error.

 In preparation for the neural learning, we suppose that the state space $X$ may be smoothly embedded in $\mathbb{R}^d$.
 We will adopt the standard convention of scalar feed-forward neural networks as a composition of affine maps and nonlinear activations,
$\phi\colon\mathbb{R}^{d}\to\mathbb{R}$. Stacking $N$ such networks gives a vector‑valued network $\hat{\phi}(x)\colon\mathbb{R}^{d}\to\mathbb{R}^{N}$, written as 
 \begin{equation}
 \label{eq:vector_valued_neural}
 \hat{\phi}(x)=\bigl(\phi_{1}(x),\dots,\phi_{N}(x)\bigr)^{\top}
 \end{equation}
Finally, we call the collection $\{\phi_{1},\dots,\phi_{N}\}$ a \textit{neural basis} for the finite‑dimensional subspace $V=\operatorname{span}\{\phi_{j}\}_{j=1}^{N}\subset L^{2}(X)$.
 
\subsection{Single Autoencoder Basis Operator Networks (SABON)}
\label{subsec:sabon}

To address the problem of simultaneously learning an approximator to the transfer operator and a subspace adapted to the underlying dynamics, we employ an architecture comprising four stages: (i) an encoder $\mathcal E$ that constructs basis functions evaluated on a training grid, (ii) a block $\mathcal P$ that projects observables onto the learned basis, (iii) a linear latent map $\mathcal G$ that approximates the transfer operator in the coefficient space, and (iv) a reconstructor $\mathcal R$ that lifts these coefficients back to the function space by expanding them in the learned basis.

\begin{figure}[H]
\centering
\includegraphics[width=1\linewidth]{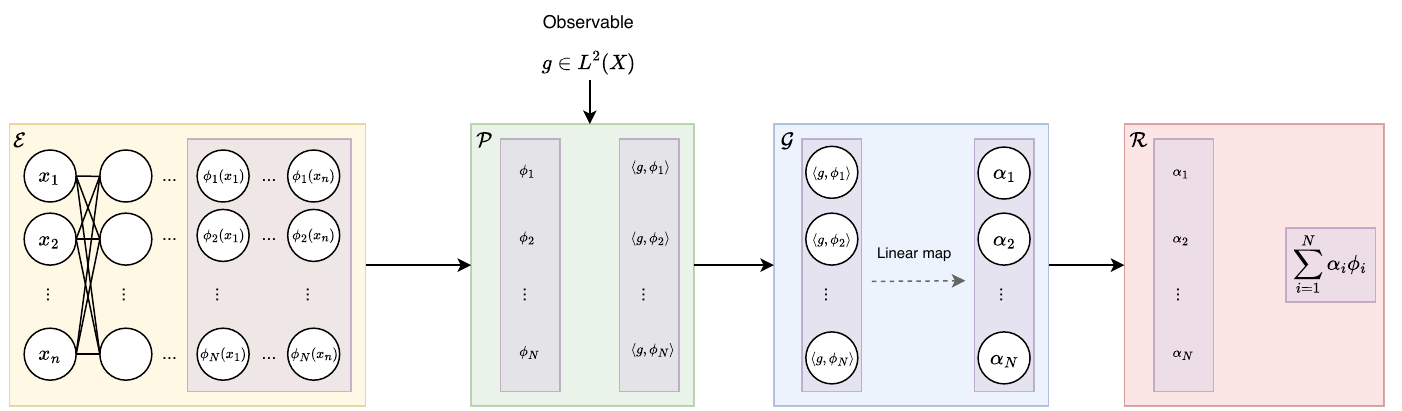}
 \caption{SABON consists of an encoder $\mathcal{E}$, a latent projection $\mathcal{P}$, a linear map $\mathcal{G}$, and a reconstruction map $\mathcal{R}$.
The encoder learns basis functions $\{\phi_j\}_{j=1}^{N}$, while $\mathcal{G}\colon\mathbb{R}^N\to\mathbb{R}^N$ approximates the action of $\mathcal{L}$ on $V=\operatorname{span}\{\phi_j\}_{j=1}^{N}$.
The reconstruction map recovers a function from its coefficients in the latent space.}
\label{fig:architecture}
\end{figure}

Two inputs are needed for the above architecture: the discretization $\{x_i\}_{i=1, \dots, n}\subset X$ where all maps are evaluated, and the observable function $g$, represented by its evaluation over the points $[g(x_1), g(x_2),\ldots,g(x_n)]^\top$.
We explain the SABON components in turn.

\paragraph{Encoder ($\mathcal{E}$)}
    The encoder in SABON is a stack of vector‑valued neural networks, each applied pointwise to one grid element. Recall from \eqref{eq:vector_valued_neural} that $\hat{\phi}\colon \mathbb{R}^{d}\longrightarrow \mathbb{R}^{N}$ is the vector-valued network defined in the embedding of $X$ in $\mathbb{R}^d$. The encoder
    \begin{equation*}
    \mathcal{E}\colon (\mathbb{R}^{d})^{n}\longrightarrow (\mathbb{R}^{N})^{n}\cong\mathbb{R}^{Nn},\qquad
  \mathcal{E}\bigl((x_1,\dots,x_n)\bigr)=\bigl(\hat{\phi}(x_1),\dots,\hat{\phi}(x_n)\bigr)
    \end{equation*}
acts by evaluating $\hat{\phi}$ at every grid point. Written componentwise, the output is the column-major vector
    \begin{equation*}
\bigl[\phi_1(x_1),\dots,\phi_N(x_1),\phi_1(x_2),\dots,\phi_N(x_2),\dots,\phi_1(x_n), \dots, \phi_N(x_n)\bigr]^{\top}\in\mathbb{R}^{Nn}.
    \end{equation*} 
 The collection $\{\phi_j\}_{j=1}^{N}$ forms the learned neural basis of the subspace $V$ introduced above.

\paragraph{Projection ($\mathcal{P}$)}
 The projection block computes\footnote{in practice, this inner product is evaluated as $\frac{1}{n}\sum_{i=1}^n g(x_i)\phi_j(x_i)$.} $\{\langle g, \phi_j \rangle \}_{j=1, \dots, N}$ for an observable $g \in L^2(X)$ and a set of learned basis functions $\{\phi_j\}_{j=1}^{N}$.
 We define $\mathcal{P}_{\{\phi_j\}_{j=1}^{N}}: L^2(X)\to \mathbb{R}^N$ via
\begin{equation}
\label{eq:projector_block}
    \mathcal{P}_{\{\phi_j\}_{j=1}^{N}}(g) =\bigl(\langle g,\phi_1\rangle,\dots,\langle g,\phi_N\rangle\bigr)
\end{equation}
We shall sometimes omit the subscript of $\mathcal{P}$.

\paragraph{Map $\mathcal{G}$ in latent space}
The action of the transfer/Koopman operator is represented by a linear map implemented in block $\mathcal{G}$. The coefficients $\langle g, \phi_j \rangle$, $j=1,\ldots,N$ in the latent space are fed into $\mathcal{G}$, which learns the action of $\hat{\mathcal{L}}$ in the latent space. That is, the block $\mathcal{G}$ consists of a learnable linear map capable of transforming the coefficients of $g$ when represented in the basis $\{\phi_j\}_{j=1}^{N}$ into the corresponding coefficients of $\hat{\mathcal{L}}(g)$ in the same basis.

\paragraph{Reconstruction ($\mathcal{R}$)}

The architecture employs a reconstruction block that converts the latent coefficients produced by the linear map $\mathcal{G}$ into the approximation of the transformed observable
\begin{equation*}
  \hat{\mathcal{L}}(g)(x)=\sum_{j=1}^{N}\alpha_j\,\phi_j(x).
\end{equation*}
Because the same functional expression can reconstruct any element of the span of the learned basis, we use the general definition of $\mathcal{R}_{\{\phi_j\}_{j=1}^{N}}\colon \mathbb{R}^{N}\to L^{2}(X)$ as
\begin{equation}
\label{eq:reconstructor_block}
  \mathcal{R}_{\{\phi_j\}_{j=1}^{N}}(a_1,\dots,a_N)=\sum_{j=1}^{N}a_j\,\phi_j.
\end{equation}
To simplify the notation, we will also sometimes omit the subscript in the sequel. The coefficients serving as input to $\mathcal{R}$ will always be clear from the context (i.e.\ which maps are composed with $\mathcal{R}$).

One of the key features of SABON is that it simultaneously optimises the learned subspace and the operator action. This is performed through a suitable loss function comprised of two components (in practice these are calculated with their discrete counterparts using evaluations at the points $\{x_i\}_{i=1}^n$):
\begin{enumerate}
    \item The composition $\mathcal{R}\circ\mathcal{G}\circ\mathcal{P}$, which approximates the projected action of $\mathcal{L}$ on $V$, has the loss term:
    
    \begin{equation}
        \mathcal{E}_1(\mathcal{L}, \mathcal{G}, \mathcal{P}, \mathcal{R}, g) \;\coloneq\; \frac{\|\mathcal{L}(g) - \mathcal{R}\circ\mathcal{G}\circ\mathcal{P}(g)\|_2}{\|\mathcal{L}(g)\|_2}.
    \end{equation}

    \item A sparsity penalty encourages the learned basis functions to have local support:

    \begin{equation}
        \mathcal{E}_2(\{\phi_j\}_{j=1}^N) \;\coloneq\; \frac{1}{N} \sum_{j=1}^N \|\phi_j\|_{L^1}.
    \end{equation}

    \item A reconstruction penalty encourages the learned subspace to accurately represent $k$-step iterates:
    \begin{equation}
        \mathcal{E}_3(\mathcal{L}, \mathcal{P}, \mathcal{R}, g; k) \;\coloneq\; \frac{\|\mathcal{L}^k(g) - \mathcal{R} \circ \mathcal{P}(\mathcal{L}^k(g))\|_2}{\|\mathcal{L}^k(g)\|_2},
    \end{equation}
    where $k \geq 1$. When $k=1$, this reduces to reconstruction of the single iteration of $\mathcal{L}$.
    
\end{enumerate}

The complete loss function is
\begin{equation}
    J(\mathcal{L}, \mathcal{G}, \mathcal{P}, \mathcal{R}, g, \{\phi_j\}_{j=1}^N)= \beta_1 \mathcal{E}_1 + \beta_2 \mathcal{E}_2 + \beta_3 \mathcal{E}_3,
\end{equation}
where $\beta_1, \beta_2, \beta_3\ge0$.

We also explored a supplementary projection penalty aimed at reconstructing the original observable:
\begin{equation*}
\mathcal{E}_{p_{1}}(\mathcal{P}, \mathcal{R}, g) \;\coloneq\; \frac{\|g - \mathcal{R} \circ \mathcal{P}(g)\|_2}{\|g\|_2}.
\end{equation*}
In practice, we found that this penalty sometimes adversely affected eigenpair reconstruction, likely by forcing the subspace to represent arbitrary inputs rather than  dynamically relevant directions.

We note that the inner-product structure of the projection block naturally encourages the learned basis to become approximately orthonormal during training. The composition $Q \coloneq \mathcal{R_{\phi}} \circ \mathcal{P}_{\phi}$ maps $g \mapsto \sum_{j=1}^N \langle g, \phi_j \rangle \phi_j$, which coincides with the orthogonal projection onto $\operatorname{span}\{\phi_j\}_{j=1}^{N}$ only when $\{\phi_j\}_{j=1}^{N}$ is orthonormal. For non-orthonormal bases, this formula incurs additional reconstruction error compared to the true optimal projection. Since $\mathcal{E}_1$ penalises reconstruction error, the optimiser is driven toward orthonormal configurations. As a side benefit, orthonormal bases improve the conditioning of the loss landscape: gradients with respect to different $\phi_j$ become more independent, reducing redundancy and facilitating convergence.

Finally, we discuss the activation functions employed in the proposed architecture. 
For the $\mathcal{G}$ network, which is learning the action of a linear operator (transfer or Koopman), we selected a linear activation function, namely the identity function, and set the bias term to zero. 
The activation functions for the encoder block ($\mathcal{E}$) can be chosen among the usual candidates, and we had the best results with ReLU in our experiments.

The combined architecture and loss in SABON are structured so that the learned basis $\{\phi_j\}_{j=1}^{N}$ (i) is approximately orthogonal, (ii) spans a subspace $V$ that is adapted to the geometry of the dynamics, leading to efficient spectral approximations, and (iii) has a tendency toward local support, guided by the sparsity term $\mathcal{E}_2$ in the loss function.
Further, $\mathcal{E}_1$ \textit{simultaneously optimises} for a good finite–dimensional approximation of the operator action and enforces that $\{\phi_j\}_{j=1}^{N}$ spans a space $V$ well suited for representing $\mathcal{L}(g)$. A final rescaling step yields an orthonormal basis.

\subsection{Universal approximation theorem}

Hua \textit{et al.} (2023) \cite{HUA202321} consider a general, possibly nonlinear, operator $\mathcal{J}$ between two distinct Hilbert spaces.

\begin{equation*}
\mathcal{J}\colon\mathcal{H}_\text{in}\longrightarrow\mathcal{H}_\text{out},
\qquad\mathcal{H}_\text{in}\neq\mathcal{H}_\text{out}
\end{equation*}
and therefore employ two autoencoders to learn separate finite‑dimensional subspaces for the domain and codomain.

Each of the transfer  and Koopman operators map $L^2(X)$ into $L^2(X)$, so only a single subspace $V\subset L^{2}(X)$ is required to approximate both the observable $g$ and its image $\mathcal{L}(g)$. Consequently, in contrast to \cite{HUA202321}, one should instead use a \textit{single} encoder. We state an approximation theorem in this alternate setting.

\begin{theorem}
\label{theorem_univeral_approximation}
Let $X \subset \mathbb{R}^d$ be compact. Let $\mathcal{L}: L^2(X) \rightarrow L^2(X)$ be a continuous linear operator. 
Given $N\in\mathbb N$ and a family $\phi=\{\phi_{1},\dots,\phi_{N}\}\subset L^{2}(X)$ define
\begin{align*}
&\mathcal P_{\phi}\colon L^{2}(X)\to\mathbb R^{N},&
\qquad
&\mathcal P_{\phi}(g) =\bigl(\langle g,\phi_{1}\rangle,\dots,\langle g,\phi_{N}\rangle\bigr)^{\!\top},\\
&\mathcal R_{\phi}\colon\mathbb R^{N}\to L^{2}(X),&
\qquad
&\mathcal R_{\phi}(a_{1},\dots,a_{N})=\sum_{j=1}^{N}a_{j}\phi_{j}.
\end{align*}
Then for each compact set $\mathcal{K} \subset L^2(X)$ and any $0 < \epsilon < 1$ there exist $N > 0$, a neural basis $\phi = \{\phi_1, \phi_2, \dots, \phi_N\}: X \rightarrow \mathbb{R}^N$ and a linear map $\mathcal{G}: \mathbb{R}^N \rightarrow \mathbb{R}^N$ such that the operator $\hat{\mathcal{L}} \coloneq \mathcal{R}_{\phi} \circ \mathcal{G} \circ \mathcal{P}_{\phi}$ satisfies
    \begin{equation*}
        \sup_{g \in \mathcal{K}} \left\| \mathcal{L}(g) - \hat{\mathcal{L}}(g) \right\|_{L^2(X)} \leq \epsilon.
    \end{equation*}
\end{theorem}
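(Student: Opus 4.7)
The plan is to realise $\hat{\mathcal L}$ as a Galerkin-type projection of $\mathcal L$ onto a finite-dimensional subspace $V$ spanned by an approximately orthonormal neural basis that captures both $\mathcal K$ and $\mathcal L(\mathcal K)$. First, since $\mathcal K$ is compact in $L^{2}(X)$ and $\mathcal L$ is continuous, $\mathcal L(\mathcal K)$ is compact, so $\mathcal K\cup\mathcal L(\mathcal K)$ is a compact subset of the separable Hilbert space $L^{2}(X)$. A standard approximation argument (e.g.\ truncating a fixed orthonormal basis and using that the tail projection converges to zero uniformly on compact sets, or covering with finitely many balls and spanning the centres) gives, for any $\delta>0$, a continuous orthonormal family $\{\psi_{1},\dots,\psi_{M}\}\subset L^{2}(X)$ — continuity is arranged a posteriori by perturbing in $L^{2}$ via density of $C(X)$ and re-orthonormalising — such that
\begin{equation*}
\sup_{f\in\mathcal K\cup\mathcal L(\mathcal K)}\|f-P_{V_{0}}f\|_{L^{2}}<\delta,\qquad V_{0}=\operatorname{span}\{\psi_{j}\}.
\end{equation*}

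Second, I would invoke the classical universal approximation theorem for scalar feed-forward networks on the compact set $X$ to replace each continuous $\psi_{j}$ by a neural network $\phi_{j}$ with $\|\psi_{j}-\phi_{j}\|_{L^{2}}<\eta$ for a parameter $\eta$ to be fixed. For $\eta$ small enough the Gram matrix $G_{ij}=\langle\phi_{i},\phi_{j}\rangle$ is an $O(\eta)$ perturbation of $I_{M}$, so the neural basis $\{\phi_{j}\}$ is linearly independent and approximately orthonormal, the subspace $V=\operatorname{span}\{\phi_{j}\}$ is $O(\eta)$-close to $V_{0}$ in the gap metric, and the composition $\mathcal R_{\phi}\circ\mathcal P_{\phi}$ differs from the true orthogonal projection $P_{V}$ by $O(\eta)$ in operator norm.

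Third, I would take $\mathcal G=G^{-1}L$ with $L_{ij}=\langle\mathcal L\phi_{j},\phi_{i}\rangle$, so that $\mathcal R_{\phi}\circ\mathcal G\circ\mathcal P_{\phi}$ coincides with the Galerkin operator $P_{V}\mathcal L P_{V}$ up to $O(\eta)$ terms arising from the non-exact orthonormality of the neural basis. Since $\mathcal L$ is linear and continuous, hence bounded, the triangle decomposition
\begin{equation*}
\|\mathcal L g-\hat{\mathcal L}g\|\le\|\mathcal L g-P_{V}\mathcal L g\|+\|\mathcal L\|\,\|g-P_{V}g\|+\|P_{V}\mathcal L P_{V}g-\hat{\mathcal L}g\|
\end{equation*}
bounds the first two terms uniformly on $\mathcal K$ by a multiple of $\delta$ (using the projection guarantee of step one transferred from $V_{0}$ to $V$ at the cost of an additional $O(\eta)$), and the third by $O(\eta)\,\|\mathcal L\|\,\sup_{g\in\mathcal K}\|g\|$. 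Choosing $\delta$ of order $\epsilon/(1+\|\mathcal L\|)$ and then $\eta$ small enough to absorb the remaining constants yields the bound.

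The hard part is keeping the three error sources — projection onto $V_{0}$, neural approximation of the basis, and discrepancy between $\mathcal R_{\phi}\circ\mathcal P_{\phi}$ and the exact orthogonal projection $P_{V}$ — compatible with one another without circular dependence: the constant in the $O(\eta)$ term depends on $M$, while $M$ is fixed only after $\delta$ is chosen. The critical observation that makes this tractable is that $\mathcal G$ is a free linear map in the theorem statement, so any residual Gram correction can be absorbed into $\mathcal G$ without altering the prescribed architecture; without that flexibility one would have to orthonormalise the neural basis a posteriori and verify that the Gram-Schmidt combinations remain expressible as neural networks at the same approximation level, which is possible but more cumbersome.
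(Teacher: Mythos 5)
Your proof is correct, but it takes a genuinely different route from the paper's. The paper invokes \cite[Theorem 2.1]{HUA202321} as a black box to produce a neural basis $\phi$ for which $Q \coloneq \mathcal{R}_\phi \mathcal{P}_\phi$ satisfies $\|v - Qv\|_{L^2} \le \eta$ on $\mathcal{K} \cup \mathcal{L}(\mathcal{K})$, then sets $\mathcal{G} \coloneq \mathcal{P}_\phi \circ \mathcal{L} \circ \mathcal{R}_\phi$ (with no Gram correction) so that $\hat{\mathcal{L}} = Q \mathcal{L} Q$, and obtains the two-term bound $\|\mathcal{L}g - \hat{\mathcal{L}}g\| \le \eta + \|Q\|\,\omega_{\mathcal{L}}(\eta)$ using only the modulus of continuity of $\mathcal{L}$ on a compact set---an argument that would even cover nonlinear continuous operators. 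You instead unpack the black box: you explicitly build a continuous orthonormal system $\{\psi_j\}$ whose span $\delta$-captures $\mathcal{K} \cup \mathcal{L}(\mathcal{K})$, replace it by a neural basis, and take $\mathcal{G}$ to be a Gram-corrected Galerkin matrix, exploiting the linearity and boundedness of $\mathcal{L}$ (through $\|\mathcal{L}\|$) rather than mere uniform continuity, which gives a three-term decomposition. Making the orthonormality and conditioning issues explicit costs an extra error term and some bookkeeping (your $O(\eta)$ constants depend on the cardinality $M$), but you correctly resolve the apparent circularity by fixing $\delta$, hence $M$, before choosing $\eta$, and you rightly observe that the freedom in $\mathcal{G}$ absorbs the Gram correction---indeed $\mathcal{G} = G^{-1} L G^{-1}$ makes $\mathcal{R}_\phi \mathcal{G} \mathcal{P}_\phi$ exactly the Galerkin operator $P_V \mathcal{L} P_V$ and removes your third term entirely. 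Both routes reach the same conclusion; the paper's is shorter by outsourcing the basis construction, while yours is more self-contained and exposes the approximation machinery.
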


Although Theorem \ref{theorem_univeral_approximation} applies to any continuous linear operator $\mathcal{L}: L^2(X) \rightarrow L^2(X)$, we will focus on the transfer and Koopman operators.
\begin{remark}
\label{remark_theorem}
Examples of compact sets $\mathcal{K}\subset L^{2}(X)$ with $X\subset\mathbb{R}^{d}$, include (i) the closed unit ball of any finite-dimensional subspace in $L^{2}(X)$ and (ii) the closed unit ball of $H^{1}(X)$, for $d\ge 2$ and $X$ a bounded $C^1$ domain (its boundary is locally a $C^1$ graph).
The latter is compact in $L^2(X)$ by the Rellich--Kondrachov Embedding Theorem (e.g.\ Theorem 9.16 \cite{brezis}).

In the context of the Koopman and transfer operators associated with a dynamical system $T:X \rightarrow X$, two convenient sufficient conditions that guarantee the continuity of $\mathcal L$ are:
\begin{enumerate}
    \item $T$ is measurable and $\mu-$invariant, so that $\mathcal L$ acts as an isometry on $L^2(X)$. The circle rotation in \cref{subsec:circle_rotation} is a representative example with $\mu = \mathrm{Lebesgue}$;
    \item $T$ is non-singular with respect to $\mu$ and there exists a constant $c > 0$ such that
    \begin{equation*}
        0 < c^{-1} \le \dfrac{d(\mu\circ T^{-1})}{d\mu} \le c < \infty
    \end{equation*}
    $\mu$ almost everywhere. The Anosov map in \cref{subsec:nonlinearly_cat_map} is a representative example with $\mu=$ Lebesgue.
\end{enumerate}
\end{remark}

\begin{proof}[Proof of Theorem \ref{theorem_univeral_approximation}]
The proof proceeds almost exactly as in \cite[Theorem 2.2]{HUA202321}, so we omit the repeated details and focus on the necessary modifications.
We need only one neural basis $\{\phi_1,\ldots,\phi_N\}\subset L^2(X)$ (cf. \eqref{eq:vector_valued_neural}) for both the input and output spaces. From the universal approximation property \cite[Theorem\,2.1]{HUA202321}, the class of neural bases is dense in $L^2(X)$: for any compact $\mathcal{K} \subset L^2(X)$ and $\eta > 0$, there exist $N \in \mathbb{N}$ and a neural basis $\{\phi_1,\ldots,\phi_N\}\subset L^2(X)$ such that $\sup_{v \in \mathcal{K}} || v - Qv||_{L^2(X)} \leq \eta$, where $Q\coloneq \mathcal R_{\phi}\mathcal P_{\phi}$ is the projection onto $\operatorname{span}\{\phi_j\}_{j=1}^{N}$. Let $\mathcal{K}_{\mathcal{L}}:=\mathcal K\cup \mathcal L(\mathcal K)$, which is compact by continuity of $\mathcal{L}$, and choose $N$ and $\phi$ by \cite[Theorem 2.1]{HUA202321} so that,
\begin{equation*}
    \sup_{v\in \mathcal{K}_{\mathcal{L}}}\|v-Qv\|_{L^2(X)}\le \eta \quad \text{for some }\eta>0.
\end{equation*}
Next, since $Q$ is continuous and $\mathcal K$ is compact, $\mathcal{K}_{\mathcal{QL}}:=\mathcal{K}_{\mathcal{L}}\cup Q(\mathcal{K}_{\mathcal{L}})$ is compact, hence $\mathcal L$ is uniformly continuous on $\mathcal{K}_{\mathcal{QL}}$ with modulus of continuity $\omega_{\mathcal{L}}$.

The main difference appears in the error decomposition. In \cite{HUA202321}, one bounds three terms to handle potentially different neural bases for input and output spaces, but here we merge everything into two terms. If $\hat{\mathcal{L}} \coloneq \mathcal{R}_{\{\phi_j\}_{j=1}^{N}} \circ \mathcal{G} \circ \mathcal{P}_{\{\phi_j\}_{j=1}^{N}}$, then for $g\in \mathcal{K}$

{\small
\begin{equation*}
\begin{aligned}
\|\mathcal{L}(g) - \hat{\mathcal{L}}(g)\|_{L^2(X)} 
&\le\Bigl\|\mathcal{L}(g)- \mathcal{R}_{\{\phi_j\}_{j=1}^{N}} \circ \mathcal{P}_{\{\phi_j\}_{j=1}^{N}} \circ \mathcal{L} (g) \Bigr\|_{L^2(X)}\\
&\quad+\Bigl\|\mathcal{R}_{\{\phi_j\}_{j=1}^{N}} \circ \mathcal{P}_{\{\phi_j\}_{j=1}^{N}} \circ \mathcal{L} (g)-\mathcal{R}_{\{\phi_j\}_{j=1}^{N}} \circ \mathcal{G} \circ \mathcal{P}_{\{\phi_j\}_{j=1}^{N}}(g) \Bigr\|_{L^2(X)}.
\end{aligned}
\end{equation*}}

For the first term, since $\mathcal L(g)\in \mathcal L(\mathcal K)\subset \mathcal{K}_{\mathcal{L}}$, we have $\|\mathcal L(g)-Q\mathcal L(g)\|\le \eta$.
Setting $\mathcal G:=\mathcal P_\phi\circ \mathcal L\circ \mathcal R_\phi$ in the second term we obtain
\begin{equation*}
    \|Q\,\mathcal L(g)-Q\,\mathcal L(Qg)\|\le\|Q\|\,\|\mathcal L(g)-\mathcal L(Qg)\|
\le\|Q\|\,\omega_{\mathcal L}(\|g-Qg\|)\le\|Q\|\,\omega_{\mathcal L}(\eta).
\end{equation*}

Everything else remains unchanged from \cite[Theorem 2.2]{HUA202321}, and thus we conclude
\begin{equation*}
    \sup_{g\in \mathcal{K}}\|\mathcal{L}(g)-\hat{\mathcal{L}}(g)\|_{L^2(X)}
    \le \eta+\|Q\|\,\omega_{\mathcal L}(\eta).
\end{equation*}
Choosing $N$ and $\phi$ so that the right-hand side is smaller than $\epsilon$ yields the claim.
\end{proof}

\section{Numerical examples}
\label{sec:examples}

In this section, we illustrate the results obtained with our architecture through three representative examples of increasing complexity. We consider the Perron--Frobenius operator in all examples, but the procedure is identical for training data generated by the Koopman operator.

The first example considers a circle rotation, highlighting the recovery of meaningful eigenvalues and eigenfunctions of the transfer operator. The remaining two examples examine nonlinear Anosov diffeomorphisms on the two-torus: a weakly nonlinear perturbation of Arnold's cat map, and a strongly nonlinear variant. These hyperbolic maps are highly anisotropic and the eigenfunctions of their transfer operators are known to be \emph{distributions} rather than functions \cite{GL06}. Through these challenging examples, we demonstrate how the learned basis functions adapt to the system dynamics, providing efficient and accurate recovery of spectral properties such as the SRB measure.

\subsection{Data generation}
\label{subsec:data_generation}
All examples in this paper --- the circle rotation and the two Anosov maps --- are dynamical systems on periodic domains.
We therefore require periodic observables, and we use random trigonometric polynomials to provide a rich class of training functions. To train the proposed architecture, we consider the intrinsic dimension $d' = \dim \; X$ and generate a dataset of $D$ random trigonometric polynomial functions defined on a $d'$-dimensional domain. Each such function is constructed as a finite linear combination of sinusoidal basis functions (sine and cosine terms) in each dimension, with randomly chosen coefficients. Formally, a general $d'$-dimensional trigonometric polynomial of maximum order $K$ can be written as a linear combination of terms of the form
$\prod_{j=1}^{d'} \Psi_j(k_j x^{(j)})$,
where each $\Psi_j(\cdot)$ is either $\sin(\cdot)$ or $\cos(\cdot)$, and each integer frequency $k_j$ ranges from $1$ to $K$ for $\sin(\cdot)$ and from $0$ to $K$ for $\cos(\cdot)$. In other words, no oscillatory component in any dimension exceeds frequency $K$. The coefficients in the linear combination are real numbers drawn at random uniformly in $[-1,1]$.
Note that the inclusion of $k_j = 0$ allows constant (zero-frequency) components in each dimension as well.
For example, in $d'=2$ dimensions (with coordinates $x^{(1)}$ and $x^{(2)}$), such trigonometric polynomials are linear combinations of
$\sin(k_1 x^{(1)})\sin(k_2 x^{(2)}), \sin(k_1 x^{(1)})\cos(k_2 x^{(2)}), \cos(k_1 x^{(1)})\sin(k_2 x^{(2)})$, and $\cos(k_1 x^{(1)})\cos(k_2 x^{(2)}),$
 with $k_1, k_2 \in \{1,\ldots,K\}$ for the sine terms and $k_1, k_2\in \{0,1,\ldots,K\}$ for the cosine terms. 
In the general $d'$-dimensional case, each term is a product of trigonometric functions, one for each of the $d'$ coordinates, with frequencies up to $K$ in each coordinate.
Such functions contain a rich variety of mixed-frequency components across all dimensions.

 Once the $D$ random functions ${f_j}$ are generated as described above, we apply $\mathcal{L}$ to each of them to obtain the corresponding output functions $\mathcal{L}f_j$. This yields a training set of $D$ input-output pairs $\{f_j;\mathcal{L}f_j\}_{j=1}^D$. If the reconstruction loss $\mathcal{E}_3$ is used with index $k > 1$, we additionally use the $k$-step iterates $\mathcal{L}^k f_j$. In our training, $[f_j(x_1),f_j(x_2),\ldots,f_j(x_n)]^{\top}$ serves as an input function evaluated on the discretization $\{x_i\}_{i=1, \dots, n}$, and $[\mathcal{L}f_j(x_1),\allowbreak\mathcal{L}f_j(x_2),\ldots,\mathcal{L}f_j(x_n)]^{\top}$ is treated as the target output. By using trigonometric polynomials as input functions, we ensure that the action of $\mathcal{L}$ is well defined and smooth on these inputs. This property simplifies the generation of training data when $\mathcal{L}f_j$ can be computed analytically, and preserves the continuity that is important for machine-learned approximations.
 
 In summary, our choice of trigonometric polynomials is natural because (i) they form a very expressive class of functions, which are dense in the space of continuous and square-integrable periodic functions, (ii) they are smooth, facilitating stable training of the network (gradients can be computed reliably, and there are no discontinuities or singularities in the input data), and (iii) their generation is straightforward and computationally inexpensive.

\subsection{Examples setup}
We train on collections of functions as described in \cref{subsec:data_generation}. In our examples, the discretizations $\{x_i\}_{i=1, \dots, n}$ lie on a uniform grid in the domain, but we could also have chosen a discretization based on scattered points.
The validation and test sets are drawn from the same underlying class of functions as our training data.
The parameters of each dataset are summarised in \cref{tab:data_specifics}.
\begin{table}[H]
\caption{Data parameters for the circle rotation and perturbed cat map. We list the number of training functions ($D$), maximum trigonometric polynomial order ($K$), number of validation functions, number of test functions, and the grid size ($n$).}
\label{tab:data_specifics}
\centering
\begin{tabular}{|l|l|l|l|l|l|}
\hline
Example           & $D$ & $K$ & Validation & Test & $n$ \\ \hline
Circle rotation   & 1000  & 9          & 500        & 100  & 100       \\ \hline
Perturbed cat map & 3000  & 5          & 500        & 500  & 10000         \\ \hline
\end{tabular}
\end{table}

The training set is used to fit the model parameters by minimising the defined loss.
During training, we periodically evaluate the current model on the validation set. We also explore alternative hyperparameter settings --- e.g.\ learning rate, network depth, and regularisation strength—and retain the configuration that achieves the lowest validation error.
Only after all training and hyperparameter selection are complete do we assess the model on the test set. This provides an unbiased estimate of the model's performance on unseen functions, since the test data is not used to guide model selection.

For more complex dynamics, like the nonlinearly perturbed cat map, we employ larger architectures to capture the increased complexity. The specifics of each architecture, including the dimensions, activation functions, and the number of learned basis functions are shown in \cref{tab:architecture_specifics}.

\begin{table}[H]
\caption{Architecture specifics for the circle rotation and nonlinear cat map. We list the encoder ($\mathcal{E}$) hidden layer architecture (e.g.\ $5\times512$ denotes five hidden layers with 512 units each), activation functions, total parameter count, and training time. The approximation network $\mathcal{G}$ is a linear map (i.e.\ a matrix) acting on $\mathbb{R}^N$ for all examples.}
\label{tab:architecture_specifics}
\centering
\begin{tabular}{|l|ll|l|r|r|}
\hline
\multirow{2}{*}{Example} & \multicolumn{2}{l|}{Encoder ($\mathcal{E}$)} & \multirow{2}{*}{$N$} & \multirow{2}{*}{Params.} & \multirow{2}{*}{Time} \\ \cline{2-3}
 & \multicolumn{1}{l|}{Layers} & Act. & & & \\ \hline
Circle rotation & \multicolumn{1}{l|}{5 $\times$ 512} & ReLU & 19 & 1.06M & 2.25 min \\ \hline
Nonlinear cat map & \multicolumn{1}{l|}{5 $\times$ 2048} & ReLU & 324 & 17.6M & 17.65 min \\ \hline
Conj.\ nonlinear cat map & \multicolumn{1}{l|}{5 $\times$ 2048} & ReLU & 676 & 18.6M & 37.13 min \\ \hline
\end{tabular}
\end{table}

All experiments were performed on an A100 NVIDIA GPU, and the model was implemented using PyTorch. We trained for 10,000 epochs on the circle rotation and 4,500 epochs on each Anosov map. We briefly comment on sensitivity to hyperparameters. As shown in \cref{tab:architecture_specifics}, more complex dynamics may require a network with slightly larger capacity (e.g.\ wider hidden layers). However, the dominant factor affecting performance is the number of basis functions $N$, which must grow with the complexity of the dynamics to adequately capture the operator dynamics. Regarding the multi-step reconstruction loss $\mathcal{E}_3$, we observe that the choice of index $k$ should reflect the anisotropy of the dynamics. For systems with more irregular structures in certain directions --- such as Anosov diffeomorphisms, whose eigendistributions are smooth along unstable manifolds but highly irregular along stable ones --- larger values of $k$ are beneficial. This is because higher iterates $\mathcal{L}^k(g)$ increasingly expose the anisotropic geometry of the dynamics, encouraging the learned basis to adapt to these irregular directions. Across our three examples, we used $k=0$ for the circle rotation and $k=2$ for the two Anosov maps.

\subsection{Transfer operator for a circle rotation}
\label{subsec:circle_rotation}
The circle rotation map $T:S^1 \to S^1$ is a transformation in the circle given by $T(\theta)= \theta + \alpha\pmod{2\pi}$.
For our numerical examples, we choose $\alpha=-1$. 
To avoid discontinuities at the endpoints of the domain interval, we embed the unit-radius circle $S^1$ into $\mathbb{R}^2$, identifying each $\theta$ with $(\cos \theta,\sin \theta)\in \mathbb{R}^2$. Thus the ambient dimension is $d=2$ while the intrinsic dimension remains $d'=1$.

\begin{figure}[H]
  \centering
  \includegraphics[width=1\linewidth]{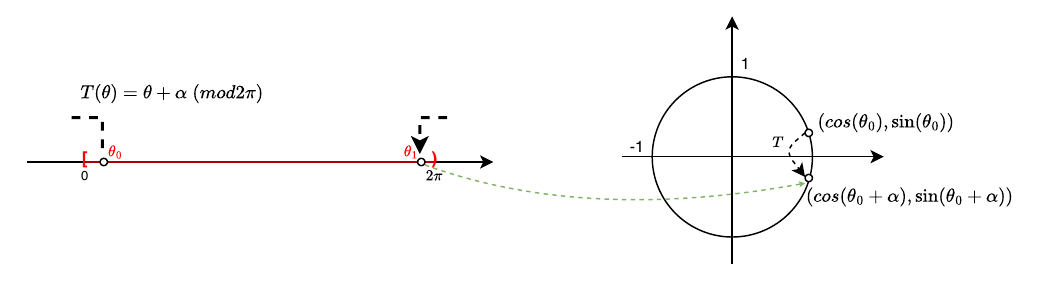}
  \caption{Embedding of the circle‑rotation map
$T(\theta)=\theta+\alpha \pmod{2\pi}$ from the interval
$[0,2\pi)$ into the unit circle $S^{1}\subset\mathbb{R}^{2}$.
Each angle $\theta$ is sent to its Cartesian coordinates
$(\cos\theta,\sin\theta)$, removing the discontinuity
that would occur at $\theta=0\equiv 2\pi$ in the interval picture.
The initial point $\theta_{0}$ is chosen close to the right‑hand
endpoint of the interval to highlight the jump that is resolved by
the embedding.}
\label{fig:circle_rotation_embedding}
\end{figure}

After the learning routine, we obtain both a set of basis functions spanning a subspace well-suited for spectral approximation and an estimator $\hat{\mathcal{L}}$ for the transfer operator's action.
\cref{fig:circle_rotation_input_output} displays how $\hat{\mathcal{L}}$ performs on unseen data for various sparsity scenarios.
\begin{figure}[H]
  \centering
\includegraphics[width=0.92\linewidth]{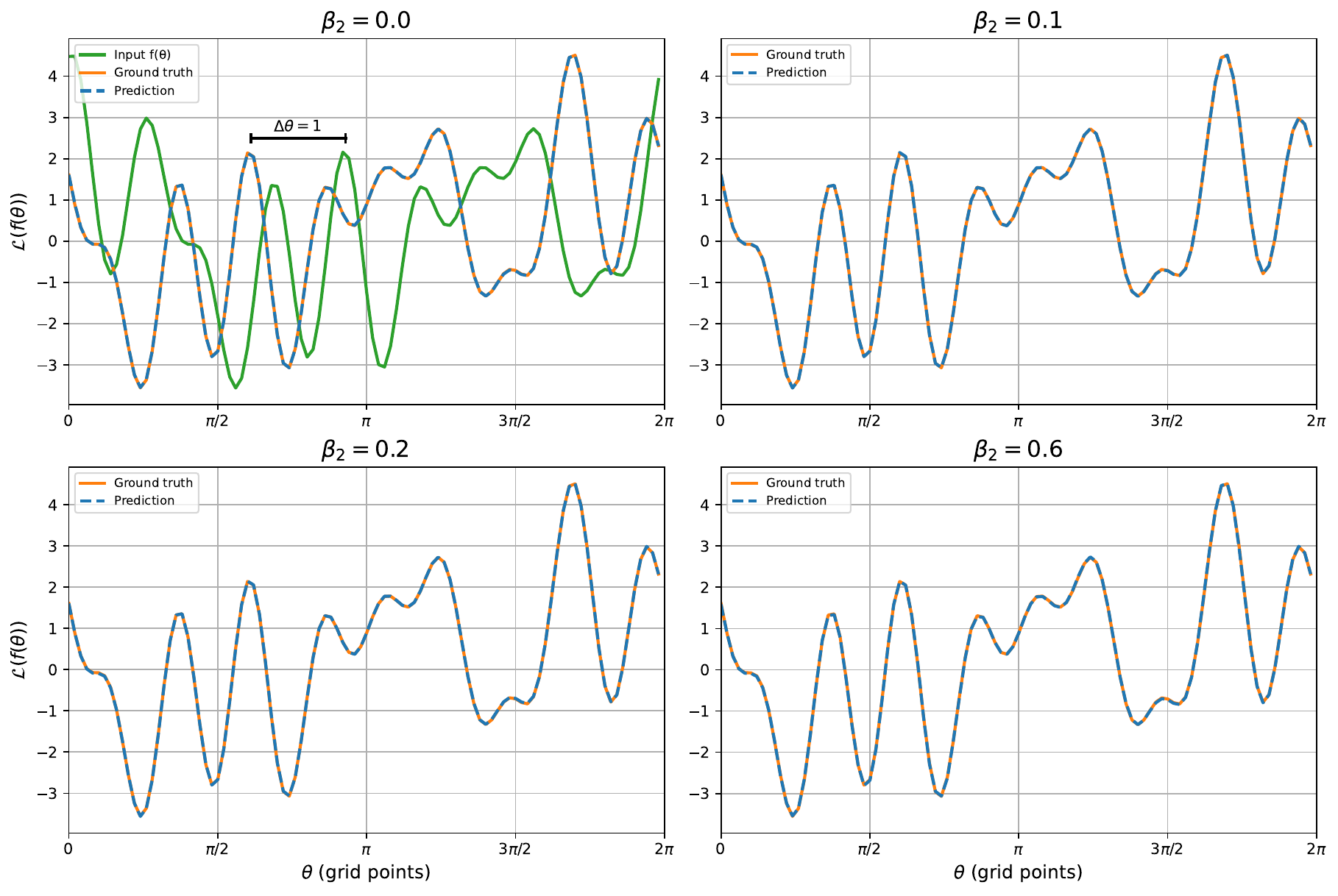}
  \caption{Performance of the learned operator approximation on unseen input data, for different levels of sparsity. No appreciable difference in performance is seen across the various sparsity penalties.}
\label{fig:circle_rotation_input_output}
\end{figure}
The corresponding quantitative results --- mean relative $L^2$ errors averaged over the test set --- are summarised in \cref{tab:mean_rel_errors}.
\cref{tab:mean_rel_errors} is consistent with \cref{fig:circle_rotation_input_output}; there is little change in relative error observed with increasing sparsity.
\begin{table}[H]
\caption{Mean relative $L^2$ error of each model on the test set.}
\label{tab:mean_rel_errors}
\centering
\begin{tabular}{|l|c|}
\hline
Model & Mean relative error \\ \hline
No Sparsity ($\beta_2 = 0$) & $3.974\times 10^{-3}$ \\ \hline
Sparsity ($\beta_2 = 0.1$) & $3.641\times 10^{-3}$ \\ \hline
Sparsity ($\beta_2 = 0.2$) & $3.958\times 10^{-3}$ \\ \hline
Sparsity ($\beta_2 = 0.6$) & $6.234\times 10^{-3}$ \\ \hline
\end{tabular}
\end{table}

The learned subspace is spanned by the set of basis functions depicted in \cref{fig:circle_rotation_basis_functions}. By introducing a sparsity penalty, these functions gain increasingly localised support, as shown in \cref{fig:circle_rotation_basis_functions}.
\begin{figure}[H] \centering \includegraphics[width=1\linewidth]{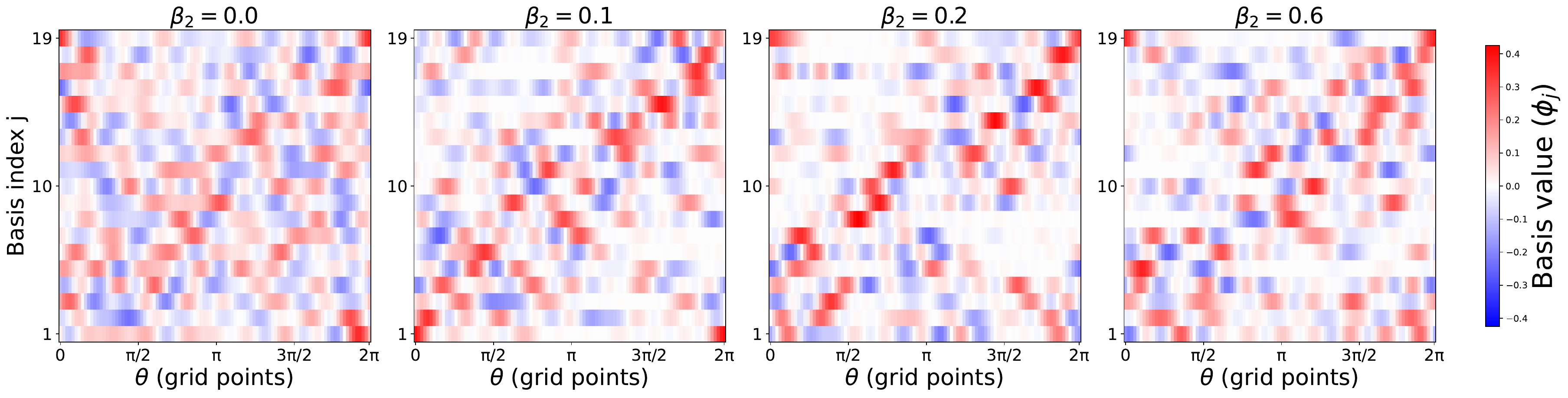} \caption{$L^2$-normalised basis functions obtained under different sparsity penalties: each panel displays a distinct basis for a distinct sparsity penalty. 
In a single panel, the state space coordinates (the angles $\theta\in S^1$) are along the $x$-axis, and each row shows the values of a different basis function. 
Higher sparsity encourages localisation, which is reflected in more concentrated regions of support.
} \label{fig:circle_rotation_basis_functions} \end{figure}

After $L^2$-normalising the learned basis, we examine the Gram matrix, defined as
\begin{equation}
\label{eq:gram_matrix}
M_{kj} = \langle \phi_k, \phi_j\rangle,
\qquad k,j = 1,\dots,N,
\end{equation}
in \cref{fig:circle_rotation_gram_matrix}. The introduction of a sparsity penalty slightly improves orthogonality.

\begin{figure}[H] \centering \includegraphics[width=1\linewidth]{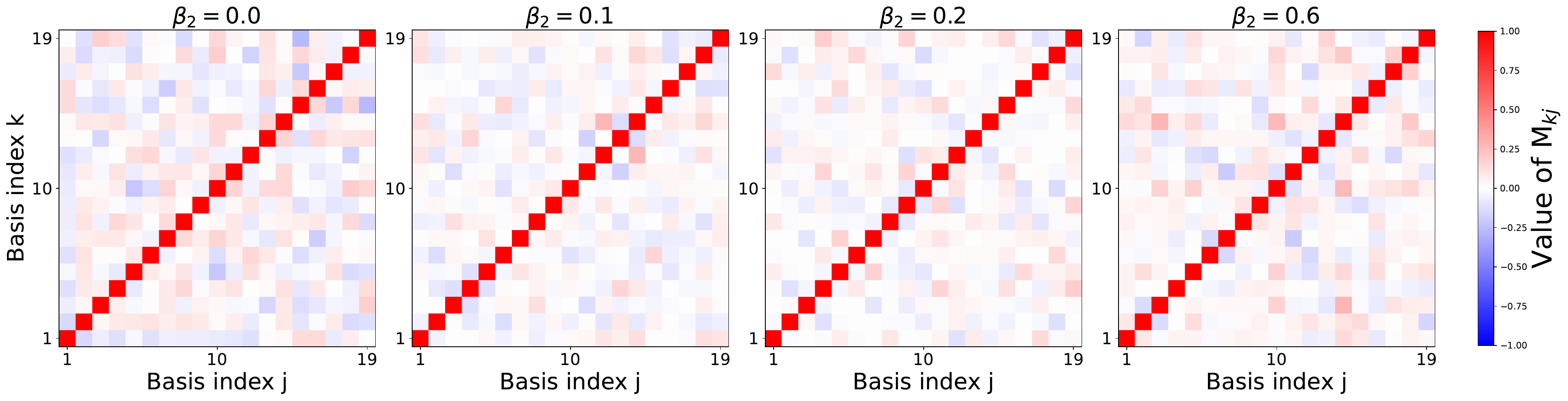} \caption{Gram matrix of the normalised basis functions}
\label{fig:circle_rotation_gram_matrix} \end{figure}

We now recover the spectral properties of $\mathcal{L}$ from the learned estimator $\hat{\mathcal{L}}$. 
Consider the collection of learned basis functions $\{\phi_j\}_{j=1}^{N} \subset L^2(X)$. Suppose an observable $g \in L^2(X)$ can be represented as
\begin{equation*}
    g = \sum_{j=1}^{N} \xi_j \phi_j,
\end{equation*}
where $\xi = (\xi_1, \dots, \xi_N)^T \in \mathbb{R}^N$ is the coefficient vector.

Applying the projection operator $\mathcal{P}_\phi$ to $g$ yields
\begin{equation*}
    \mathcal{P}_\phi(g) = \begin{pmatrix} \langle g, \phi_1 \rangle \\ \vdots \\ \langle g, \phi_N \rangle \end{pmatrix} = \begin{pmatrix} \langle \sum_{j=1}^N \xi_j \phi_j, \phi_1 \rangle \\ \vdots \\ \langle \sum_{j=1}^N \xi_j \phi_j, \phi_N \rangle \end{pmatrix} = M\xi,
\end{equation*}
where $M$ is the Gram matrix as defined in \eqref{eq:gram_matrix}.

The learned operator is $\hat{\mathcal{L}} = \mathcal{R}_\phi \circ \mathcal{G} \circ \mathcal{P}_\phi$ where $\mathcal{G}$ is an $N \times N$ matrix representing the dynamics in coefficient space. When $\hat{\mathcal{L}}$ is applied to $g$ we obtain the following linear transformation:
\begin{equation*}
    \hat{\mathcal{L}}g = \mathcal{R}_\phi(\mathcal{G}(\mathcal{P}_\phi(g))) = \mathcal{R}_\phi(\mathcal{G}(M\xi)) = \sum_{j=1}^{N} (\mathcal{G}M\xi)_j \phi_j.
\end{equation*}

A function $g$ with coefficient vector $\xi$ is an eigenfunction of $\hat{\mathcal{L}}$ with eigenvalue $\lambda$ if and only if $\hat{\mathcal{L}}g = \lambda g$. This means
\begin{equation*}
    \sum_{j=1}^{N} (\mathcal{G}M\xi)_j \phi_j = \lambda \sum_{j=1}^{N} \xi_j \phi_j.
\end{equation*}
Since the basis functions are approximately linearly independent, we must have
\begin{equation}
\label{eq:eigenproblem}
\mathcal{G}M\xi = \lambda \xi.
\end{equation}

Equation \eqref{eq:eigenproblem} is an eigenproblem of size $N\times N$. Solving it yields eigenvalue-eigenvector pairs $(\lambda_k, \xi^{(k)})$
for $k=1, \dots, N$. The corresponding eigenfunctions are
\begin{equation*}
    g_k = \sum_{j=1}^{N} \xi_j^{(k)} \phi_j, \quad k = 1, \dots, N.
\end{equation*}

For the circle rotation, one can derive the eigenpairs analytically.
The transfer operator for the circle rotation is
\begin{equation*}
    (\mathcal{L}g)(\theta) = g(\theta - \alpha).
\end{equation*}
Using the eigenfunction ansatz $\psi_k(\theta) = e^{i k\theta}$ for $k\in\mathbb{Z}$, we have
\begin{equation*}
    (\mathcal{L}\,\psi_k)(\theta) = e^{i k(\theta - \alpha)} 
= e^{-i k\alpha}\,\psi_k(\theta)
\quad\Longrightarrow\quad
\lambda_k = e^{-i k\alpha},\quad 
\psi_k(\theta) = e^{i k\theta},
\end{equation*}
which shows that we have eigenpairs $(\lambda_k,\psi_k), k\in\mathbb{Z}$. Furthermore, for an irrational rotation, these are the only eigenpairs: the Fourier modes form an orthonormal basis for $L^2(S^1)$, and the $2\pi$-irrationality of $\alpha$ ensures that all eigenvalues $\lambda_k = e^{-ik\alpha}$ are distinct.
Thus, the leading eigenvalues take the form
\begin{equation*}
    \lambda_{\pm1}=e^{\pm i\alpha}.
\end{equation*}
These analytic expressions match well with our numerical experiments, which were performed with the model parameter $\beta_{2}=0.6$; see \cref{fig:circle_rotation_eigenpairs}.
Furthermore, the leading learned eigenfunctions coincide perfectly with the leading analytic eigenfunctions; see \cref{fig:circle_rotation_eigenfunctions}.
\begin{figure}[H]
    \centering
    \includegraphics[width=0.91\linewidth]{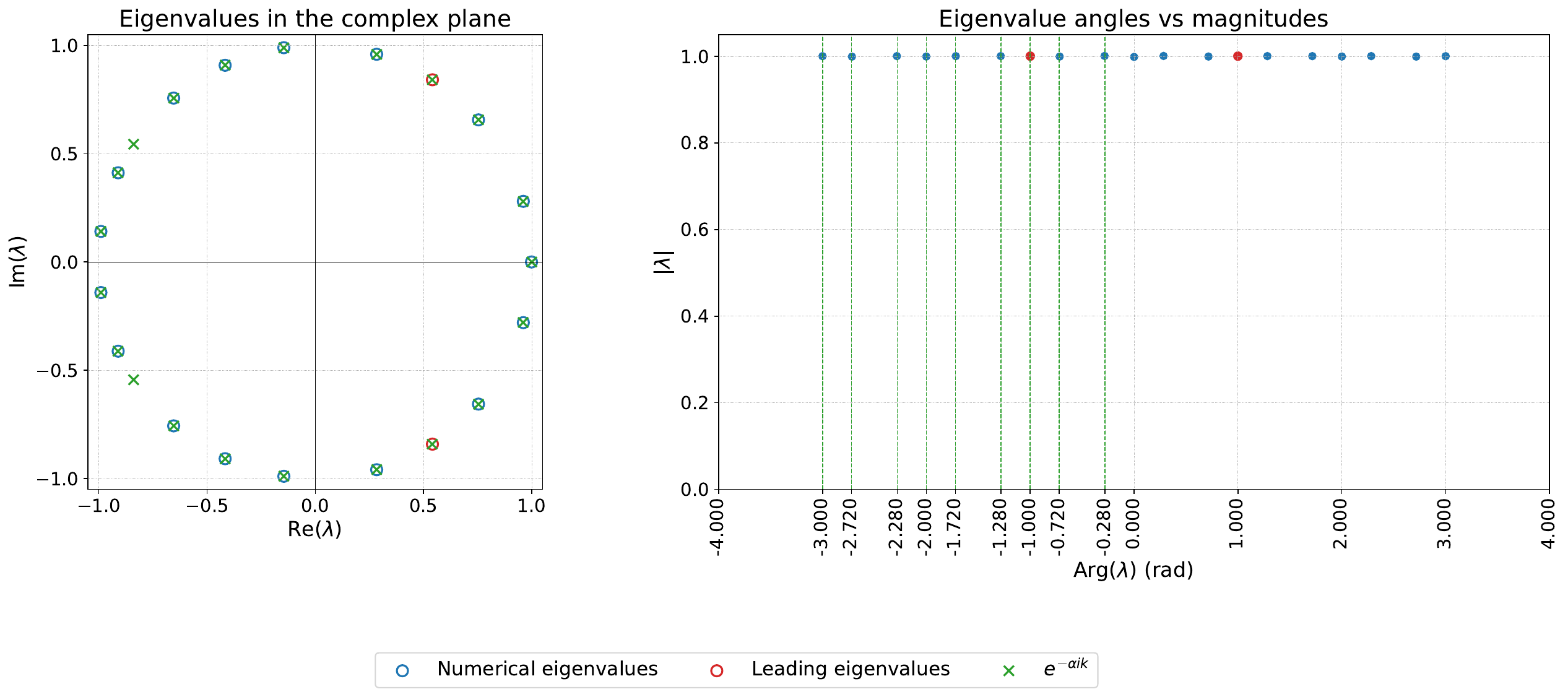}
    \caption{
        \textbf{Left:} Eigenvalues of the Perron–Frobenius operator for a circle rotation by $\alpha=-1$ radians. Numerical eigenvalues (blue circles; leading pair corresponding to $k=\pm 1$ shown as red circles) coincide with the analytic spectrum (green crosses) 
        \textbf{Right:} Magnitude versus argument of the same eigenvalues. The dashed green vertical lines mark angles that are negative integer multiples of $\alpha$, underscoring that the additional numerical eigenvalues are dynamically meaningful, but duplicate information from the lowest-order eigenvalues.}
    \label{fig:circle_rotation_eigenpairs}
\end{figure}

\begin{figure}[H]
    \centering
\includegraphics[width=0.75\linewidth]{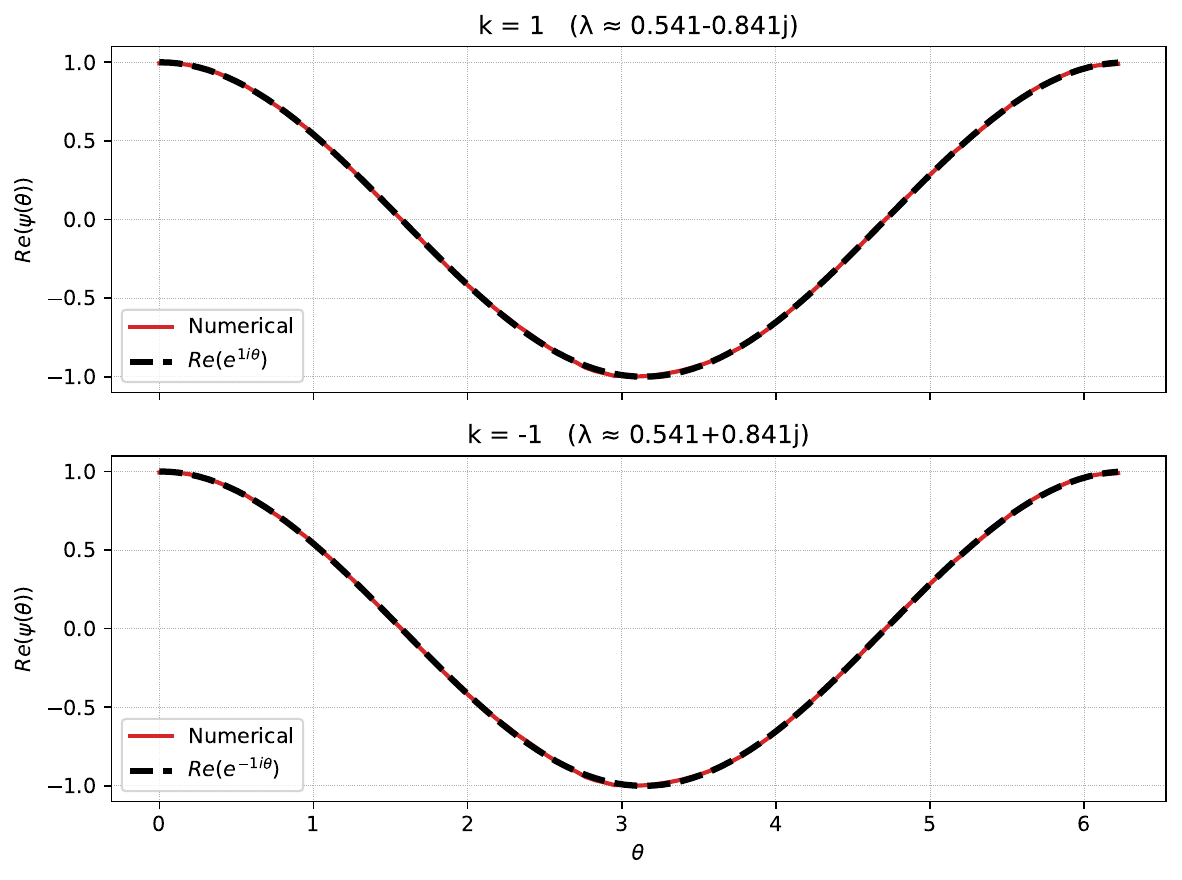}
    \caption{Real parts of the leading eigenfunctions of the transfer operator. The analytic eigenfunction is plotted in dashed black, while its numerical counterpart is in solid red. In both cases the numerical eigenfunctions coincide with the analytic curves.}
    \label{fig:circle_rotation_eigenfunctions}
\end{figure}

We close this example by discussing the choice of $N=19$ basis functions.
Recall we trained $\hat{\mathcal{L}}$ using trigonometric polynomials up to order 9 (equivalently linear combinations of real Fourier modes up to order 9).
As discussed above, each two-dimensional real subspace spanned by $\theta\mapsto\sin(k\theta)$ and $\theta\mapsto\cos(k\theta)$ is an eigenspace with eigenpair $\lambda_{\pm k}$. 
Thus, in this special situation of circle rotation dynamics, the input and output data lie in the same 19-dimensional ($19=2\times 9 + 1$) subspace, which is itself invariant under $\mathcal{L}$. Consequently, SABON learns an invariant subspace in this example. Increasing the learned basis cardinality $N$ beyond 19 will not impact the prediction error, but the additional basis dimensions cannot easily be controlled with $9^{\rm th}$-order input/output data, and the resulting subspace will no longer be invariant.

\subsection{Transfer operator for two nonlinear Anosov maps}
\label{subsec:anosov_maps}

We now examine two Anosov diffeomorphisms on the two-torus $\mathbb{T}^2$, both possessing well-defined SRB measures. The first is a weakly nonlinear perturbation of Arnold's cat map, where the stable and unstable directions remain close to those of the original map. The second is obtained by conjugating the perturbed with a nonlinear diffeomorphism, yielding a strongly nonlinear map, which is far from volume preserving and whose stable and unstable directions vary significantly over the phase space

This pair of examples allows us to investigate how SABON performs as the geometric complexity of the dynamics increases. The strongly nonlinear conjugated map presents a more challenging test case, as the basis functions must adapt to curved unstable foliations and greater variation in local volume expansion and contraction.

\subsubsection{A nonlinearly perturbed Arnold's cat map}
\label{subsec:nonlinearly_cat_map}
We consider a nonlinear perturbation of the (linear and hyperbolic) Arnold's cat map of the two-torus $X=\mathbb{T}^2$ introduced in \cite{CF20}:
\begin{equation*}
    T:\mathbb{T}^{2} \to \mathbb{T}^{2}, \qquad T(x,y)=\bigl(2x+y+2\delta\cos(2\pi x),x+y+\delta\sin(4\pi y+1)\bigr)\pmod{1},
\end{equation*}
with $\delta=0.01$.
Under this perturbation, $T$ remains Anosov and therefore has an SRB measure $\mu_{\mathrm{SRB}}$ \cite[Theorem 4.12]{bowen1975}.
The spectral theory for Anosov diffeomorphisms is challenging and relies on carefully constructed anisotropic Banach spaces (e.g.\ \cite{GL06}) that are adapted to the unstable and stable foliations.
In this example, we let $\mathcal{L}f=f\circ T^{-1}/|\det DT\circ T^{-1}|$ be the Perron--Frobenius operator, and consider $\mathcal{L}:L^2(X,m)\to L^2(X,m)$ where $m$ is Lebesgue measure on the flat torus.
Note the additional determinant term in the definition of $\mathcal{L}$; the constant function is no longer the leading eigenfunction of $\mathcal{L}$. 
In fact, the object $\mu_{\mathrm{SRB}}$ satisfying $\mathcal{L}\mu_{\mathrm{SRB}} = \mu_{\mathrm{SRB}}$ is in general a \emph{distribution} and need not lie in $L^2(\mathbb{T}^2,m)$. 
The fixed distribution $\mu_{\mathrm{SRB}}$ may be identified with the SRB measure and describes the long-term distribution of infinitely long trajectories of $T$ for \emph{Lebesgue almost all} initial conditions $x\in\mathbb{T}^2$.
This distribution of infinitely long trajectories is accessible through the fixed point of $\mathcal{L}$; this is one of the many advantages of studying transfer operators.

To eliminate the discontinuities that arise at the edges of the unit
square, we embed $\mathbb{T}^{2}$ into $\mathbb{R}^4$ by mapping each $(x,y)$ to its pair of complex phases $(\cos 2\pi x,\allowbreak \sin 2\pi x,\allowbreak \cos 2\pi y,\allowbreak \sin 2\pi y)$. That is, we treat a problem with intrinsic dimension $d'=2$ in a higher ambient space with dimension $d=4$.
We first evaluate our model’s ability to approximate the
Perron–Frobenius operator on an unseen (not in the training set) observable.
To highlight the anisotropic nature of the learned basis everywhere in the phase space, we chose not to apply the sparsity penalty throughout both Anosov examples. \cref{fig:anosov_input_groundtruth} displays an unseen input observable~$g$, the ground-truth image, and our network’s
prediction. The accuracy is quantified by the relative $L^{2}$ error of
$1.625\times10^{-2}$ on the full test set.

\begin{figure}[H] \centering \includegraphics[width=1\linewidth]{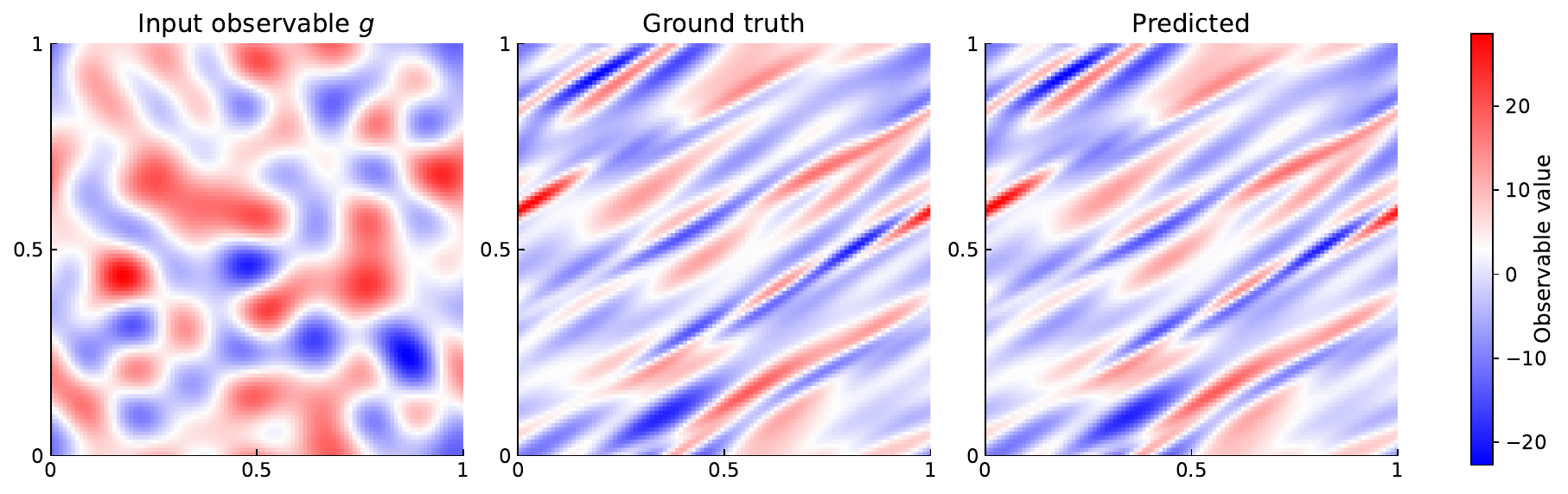} \caption{Input and output for the Perron–Frobenius operator of a perturbed Arnold’s cat map.
Left to right, the three panels show (i) an unseen input observable $g$, (ii) the corresponding ground-truth output from the true transfer operator, (iii) and the corresponding predicted output from $\hat{\mathcal{L}}$.
The mean $L^{2}$ relative error over the full test set is $1.625\times10^{-2}$.}\label{fig:anosov_input_groundtruth} \end{figure}

Next, we analyse the learned set of basis functions $\{\phi_j\}_{j=1}^{N}$ that span the subspace $V$. \cref{fig:anosov_basis_functions} shows representative basis functions obtained without sparsity penalty. The basis functions obtained through the proposed network show unmistakable alignment with contracting and expanding directions of the nonlinear dynamics generated by $T$.

\begin{figure}[H] \centering \includegraphics[width=1\linewidth]{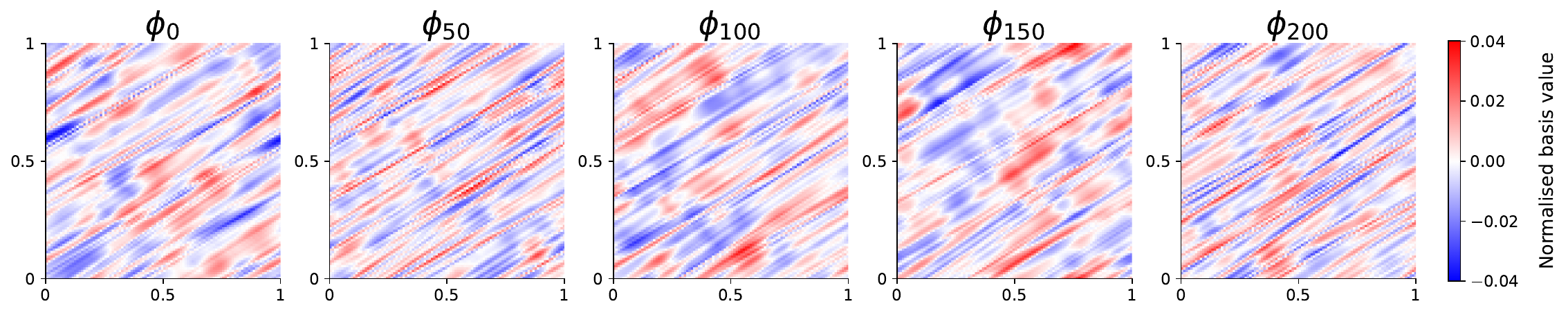} \caption{Representative basis functions learned by the network. The patterns exhibit structures that align with the map’s contracting and expanding directions, illustrating how the learned subspace is adapted to the underlying dynamics.} \label{fig:anosov_basis_functions} \end{figure}

The Gram matrix of the $L^2$-normalised learned basis functions defined in \eqref{eq:gram_matrix} is illustrated in \cref{fig:anosov_gram_matrix}.
Its nearly diagonal form shows that our training scheme implicitly promotes approximate orthogonality, even without an explicit sparsity term.

\begin{figure}[H]
    \centering
    \includegraphics[width=0.65\linewidth]{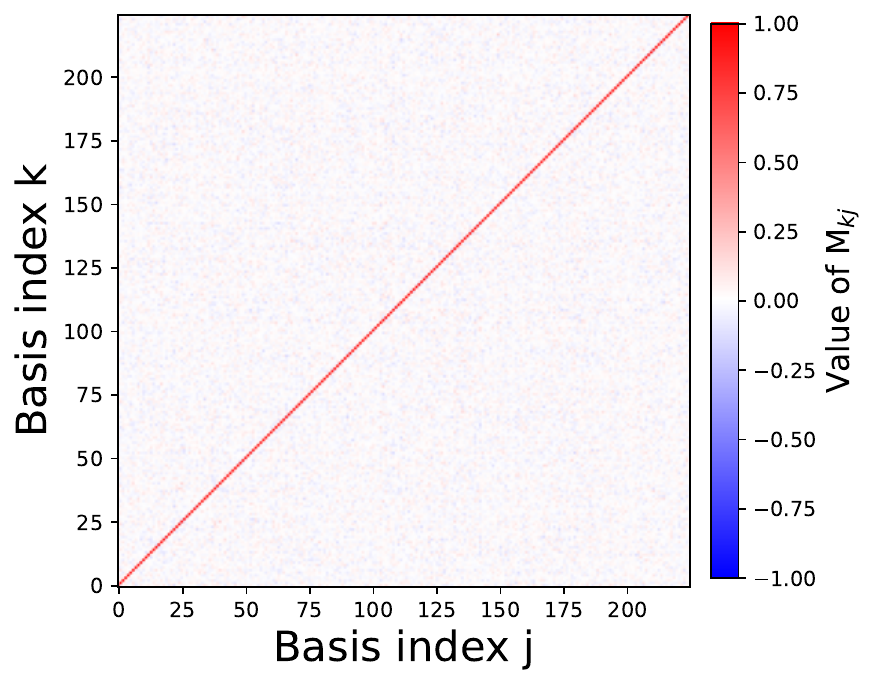}
    \caption{Gram matrix of the learned basis functions (after $L^2$ normalisation). The diagonal structure shows that the network has produced a nearly orthogonal collection, despite the absence of an explicit sparsity constraint.}
    \label{fig:anosov_gram_matrix}
\end{figure}

We now examine the spectral properties of the learned Perron–Frobenius approximator. \cref{fig:anosov_eigenvalues} displays the eigenvalues; the leading eigenvalue at $\lambda=1$ confirms the presence of an invariant measure as a fixed point of the Perron--Frobenius operator.
\begin{figure}[H] \centering \includegraphics[width=0.8\linewidth]{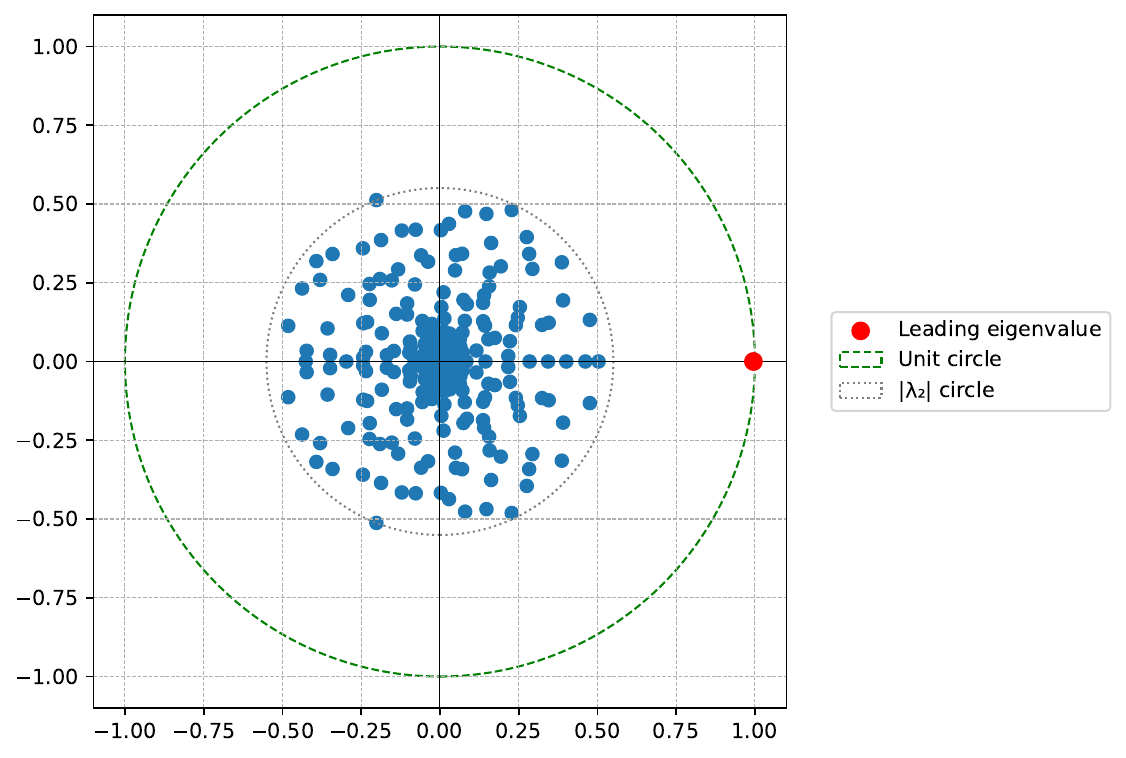} \caption{Eigenvalues of $\hat{\mathcal{L}}$ for the model with no sparsity penalty. The leading eigenvalue (corresponding to the SRB measure) is close to 1 as expected. The remaining eigenvalues are separated away from the unit circle.} \label{fig:anosov_eigenvalues} \end{figure}
The remaining eigenvalues in \cref{fig:anosov_eigenvalues} are restricted to a disk away from the unit circle, in line with expectations if one used a suitable anisotropic Banach space for the domain of $\mathcal{L}$ \cite{GL06}. 
While we work in $L^2(X,m)$, we train our estimate of $\mathcal{L}$ on smooth functions, and it is likely that this restriction to smooth functions produces the spectral gap we see in \cref{fig:anosov_eigenvalues}.

Solving the eigenequation \eqref{eq:eigenproblem} similarly to the previous example, we reconstruct the leading eigenfunction (corresponding to $\lambda=1$), which represents the SRB measure; see \cref{fig:srb_plot}.
\begin{figure}[H]
    \centering
    \includegraphics[width=1\linewidth]{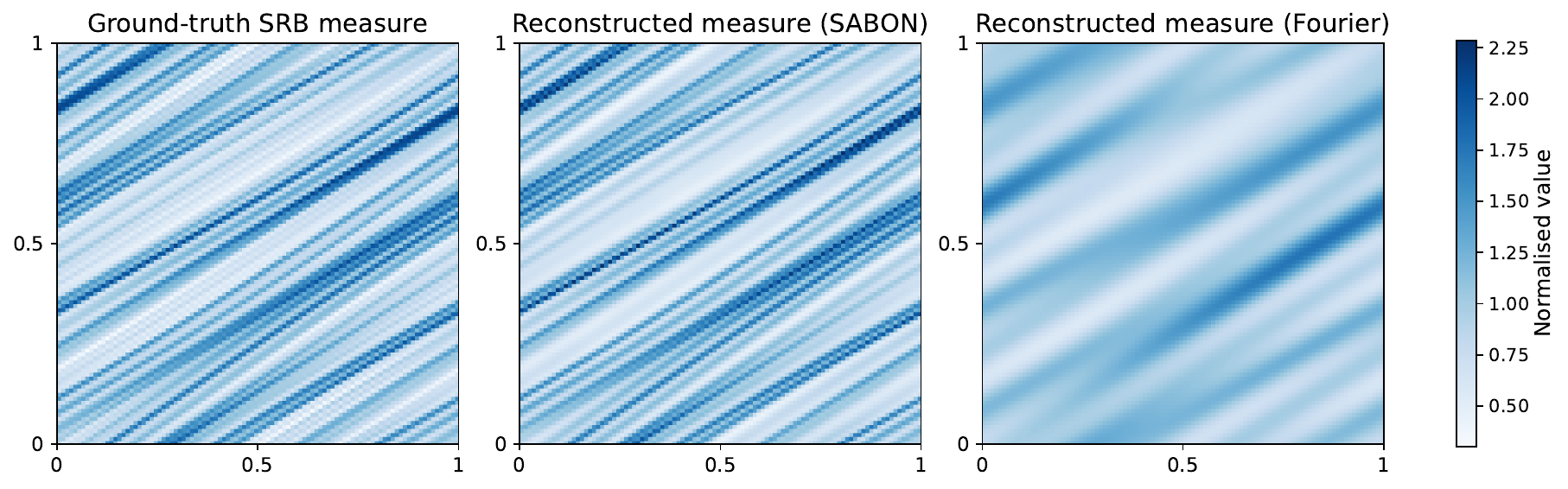}
\caption{Densities of ground-truth SRB measure (left), its approximation using the SABON basis (middle), and using a Fourier basis of the same cardinality (right). In both cases, we use only the respective basis functions together with the true dynamics to solve the resulting eigenproblem, ensuring a fair comparison of basis quality. The densities are normalised to have unit $L^1(m)$ norm.}
    \label{fig:srb_plot}
\end{figure}

The ``ground truth'' SRB measure is constructed using a Galerkin method with $N=100$ complex Fourier modes per coordinate, yielding $N^2 = 10,000$ basis functions in total. The transfer operator matrix entries are computed via FFT on a fine $4N \times 4N = 400 \times 400$ spatial grid to ensure accurate quadrature. The leading eigenfunction is then evaluated on the $100 \times 100$ grid used throughout our experiments.
In comparison, our approximated SRB measure uses 324 learned basis functions. We also compare against a Fourier basis of the same cardinality. We use the real Fourier basis on $[0,1]^2$ constructed as a tensor product of 1D bases with 18 functions each: one constant, sine-cosine pairs for modes of order $1, \ldots, 8$, and a single cosine at order $9$ (the corresponding sine is omitted to obtain exactly 324 functions for a fair comparison). The tensor product yields $18^2 = 324$ basis functions in total.
From Figure \ref{fig:srb_plot} we can see that our reconstructed SRB measure using our learned basis clearly captures more detail and relevant dynamical structure than a high-quality basis created from a similar number of Fourier modes.

An SRB measure is in general a distribution and not a function, and therefore it has no density with respect to Lebesgue measure. This makes comparisons in the $L^2$ norm less meaningful. 
Negative index Sobolev spaces \cite{MMP05,Thiffeault12} have been used to describe chaotic mixing because they contain distributions of certain orders and their norms are simple to evaluate.
We choose the space $H^{-1}$ as advocated in \cite{Thiffeault12} because it is in spirit compatible with the lowest-order Banach space from \cite{GL06}.
For a function $f$ on the torus $\mathbb{T}^d$ with Fourier coefficients $\hat{f}_k$, the $H^{-1}$ norm is defined as
\begin{align*}
    \|f\|_{H^{-1}}^2
    \;\coloneq\;
    \sum_{k \in \mathbb{Z}^d} (1 + |k|^2)^{-1} |\hat{f}_k|^2,
\end{align*}
where $|k| = \sqrt{k_1^2 + \cdots + k_d^2}$ is the Euclidean norm of the multi-index $k$. 
Using the above expression, the $H^{-1}$ norm is straightforward to evaluate via the FFT.
This norm penalises high-frequency components less severely than $L^2$, making it natural for comparing distributions that may differ in fine-scale structure.

To determine which numerical eigenfunctions of $\hat{\mathcal{L}}$ are meaningful to study, we carry out the heuristic Test 1 \cite{FGTQ14} by  computing the ratios $r_k = \frac{||\psi_k||_{H^{-1}}}{||\psi_k||_{L^{2}}}$ of
     the eigenfunctions $\{\psi_k\}$ of the approximate transfer operator $\hat{\mathcal{L}}$.
If this ratio is large, this indicates that $\psi_k\in H^{-1}(X)$, while if this ratio is very small, the opposite is indicated.
We find a very clear dichotomy in these ratios:  only the dominant eigenfunction $\psi_1$,  corresponding to the eigenvalue 1 and approximating $\mu_{\rm SRB}$, exhibits a large ratio, while all other ratios are approximately zero, implying that only the leading eigenfunction lies in $H^{-1}$. Residual computations (see e.g.\ \cite{CT23}) made in $L^2(X)$ also show a lower residual for the eigenvalue 1.
We therefore limit our quantitative analysis to the SRB measure.

To quantitatively benchmark the learned basis against the traditional Fourier basis, we measure two types of error. First, the \emph{projection error}, which will quantify how well each basis can represent the ground-truth SRB measure under both $L^2$ and $H^{-1}$ orthogonal projection in their respective norms. Second, the \emph{approximation error} compares the estimated SRB measure $\mu_\mathcal{B}$ constructed as an eigenvector of $\mathcal{L}_\mathcal{B}$ (Galerkin projection of $\mathcal{L}$ onto  the learned basis $\mathcal{B}$) against the ground truth in both the $L^2$ and $H^{-1}$ norms.

Letting $\mathcal B$ denote either of the two bases, denote the $L^2$-orthogonal projector onto $\mathcal B$ by $P_{\mathcal B}:L^{2}(X)\to\operatorname{span}(\mathcal B)$. Define the relative $L^2$ projection error
\begin{equation}
\label{eq:projection_error_l2}
    L^{2, \mathrm{rel}}_{\mathcal B}\bigl(\mu_{\mathrm{SRB}},P_{\mathcal B}(\mu_{\mathrm{SRB}})\bigr)
    \;\coloneq\;\frac{\| \mu_{\mathrm{SRB}}
    - P_{\mathcal B}(\mu_{\mathrm{SRB}}) \|_{2}}
    {\| \mu_{\mathrm{SRB}} \|_{2}},
\end{equation}
and define the relative $L^2$ approximation error for $\mu_B$
\begin{equation}
\label{eq:approximation_error_l2}
    L^{2, \mathrm{rel}}\bigl(\mu_{\mathrm{SRB}},{\mu}_{\mathcal B}\bigr)
    \;\coloneq\;\frac{\| \mu_{\mathrm{SRB}}
    - {\mu}_{\mathcal B} \|_{2}}
    {\| \mu_{\mathrm{SRB}} \|_{2}},
\end{equation}

We define the relative $H^{-1}$ projection error in an identical way to the $L^2$ projection error, except the projection is $H^{-1}$-orthogonal and the norm is $H^{-1}$:
\begin{equation}
\label{eq:projection_error_h1}
    H^{-1, \mathrm{rel}}_{\mathcal B}\bigl(\mu_{\mathrm{SRB}},P_{\mathcal B}(\mu_{\mathrm{SRB}})\bigr)
    \;\coloneq\;\frac{\| \mu_{\mathrm{SRB}}
    - P_{\mathcal B}(\mu_{\mathrm{SRB}}) \|_{H^{-1}}}
    {\| \mu_{\mathrm{SRB}} \|_{H^{-1}}}.
\end{equation}
The relative $H^{-1}$ approximation error for $\mu_{\mathcal{B}}$ is
\begin{equation}
    \label{eq:approximation_error_h1}
    H^{-1, \mathrm{rel}}\bigl(\mu_{\mathrm{SRB}},{\mu}_{\mathcal B}\bigr)
    \;\coloneq\;\frac{\| \mu_{\mathrm{SRB}}
    - {\mu}_{\mathcal B} \|_{H^{-1}}}
    {\| \mu_{\mathrm{SRB}} \|_{H^{-1}}}.
\end{equation}
We compute these quantities when (i) $\mathcal{B}$ is the learned basis and (ii) $\mathcal{B}$ is a Fourier basis of comparable cardinality.
\cref{tab:projection_error_L2_H-1} shows that in both relative $L^{2}$ and $H^{-1}$ norms, the learned SABON basis has a consistently smaller error (by a factor of around four) for both projection and approximation, further illustrating that the learned basis is able to better capture the detailed structure in the ground truth SRB distribution.

\begin{table}[H]
  \caption{Approximation quality of the ground-truth SRB measure for the weakly nonlinear cat map. Projection error (\cref{eq:projection_error_l2,eq:projection_error_h1}) measures how well each basis can represent the SRB measure under orthogonal projection. Approximation error (\cref{eq:approximation_error_l2,eq:approximation_error_h1}) compares the estimated SRB measure $\mu_{\mathcal{B}}$, computed by solving ${\mathcal{L}_\mathcal{B}}\mu_{\mathcal{B}}=\mu_{\mathcal{B}}$, with the ground-truth SRB measure.}
\label{tab:projection_error_L2_H-1}
\centering
\begin{tabular}{|l|c|c|c|c|}
    \hline
    & \multicolumn{2}{c|}{Projection Error} & \multicolumn{2}{c|}{Approximation Error} \\
    \cline{2-5}
    Basis ($N=324$) & $L^{2, \mathrm{rel}}$ & $H^{-1, \mathrm{rel}}$ & $L^{2, \mathrm{rel}}$ & $H^{-1, \mathrm{rel}}$ \\ \hline
    SABON & $6.436\times10^{-2}$ & $1.016\times10^{-3}$ & $7.314\times10^{-2}$ & $4.064\times10^{-3}$ \\ \hline
    Fourier & $2.647\times10^{-1}$ & $1.576\times10^{-2}$ & $2.648\times10^{-1}$ & $1.578\times10^{-2}$ \\ \hline
\end{tabular}
\end{table}

\subsubsection{A strongly conjugated nonlinear perturbation of Arnold's cat map}
\label{subsec:conjugated_cat_map}
To test SABON on dynamics with more complex geometry, we consider a smooth conjugation of the perturbed cat map from \cref{subsec:nonlinearly_cat_map}. Let $F:\mathbb{T}^2 \to \mathbb{T}^2$ be the diffeomorphism
\begin{equation*}
    F(x,y) = \bigl(x - a\sin(2\pi x),\; y + b\sin(2\pi y + \pi/4)\bigr) \pmod{1},
\end{equation*}
with $a = b = 0.1$. We define the conjugated map
\begin{equation*}
    T_{\mathrm{conj}} \coloneq F^{-1} \circ T \circ F,
\end{equation*}
where $T$ is the perturbed cat map from \cref{subsec:nonlinearly_cat_map} with $\delta = 0.01$.
Since $T_{\mathrm{conj}}$ is diffeomorphically conjugate to $T$, it remains Anosov and possesses an SRB measure $\mu_{\rm SRB_\mathrm{conj}} = (F^{-1})_* \mu_{\rm SRB}=\mu_{\rm SRB}\circ F$, where $\mu_{\rm SRB}$ is the SRB measure of $T$.

The transfer operator for $T_{\mathrm{conj}}$ is given by
\begin{equation*}
    \mathcal{L}_{\mathrm{conj}} f = \frac{f \circ T_{\mathrm{conj}}^{-1}}{|\det DT_{\mathrm{conj}} \circ T_{\mathrm{conj}}^{-1}|},
\end{equation*}
where the Jacobian determinant of $T_{\mathrm{conj}}$ at a point $z$ satisfies (by the chain rule and the inverse function theorem)
\begin{equation*}
    |\det DT_{\mathrm{conj}}(z)| = \frac{|\det DT(F(z))| \cdot |\det DF(z)|}{|\det DF(T_{\mathrm{conj}}(z))|}.
\end{equation*}

We train SABON on $T_{\rm conj}$ using the same architecture and training procedure as in \cref{subsec:nonlinearly_cat_map}, but with a larger basis of $N=676$ functions to accommodate the more complex phase space geometry. For comparison, we use a Fourier basis constructed as in the previous example with 26 modes per dimension (maximum order 13), yielding $26^2 = 676$ functions. \cref{fig:conjugated_input_output} displays the model's performance on an unseen observable.
\begin{figure}[H]
\centering
\includegraphics[width=1\linewidth]{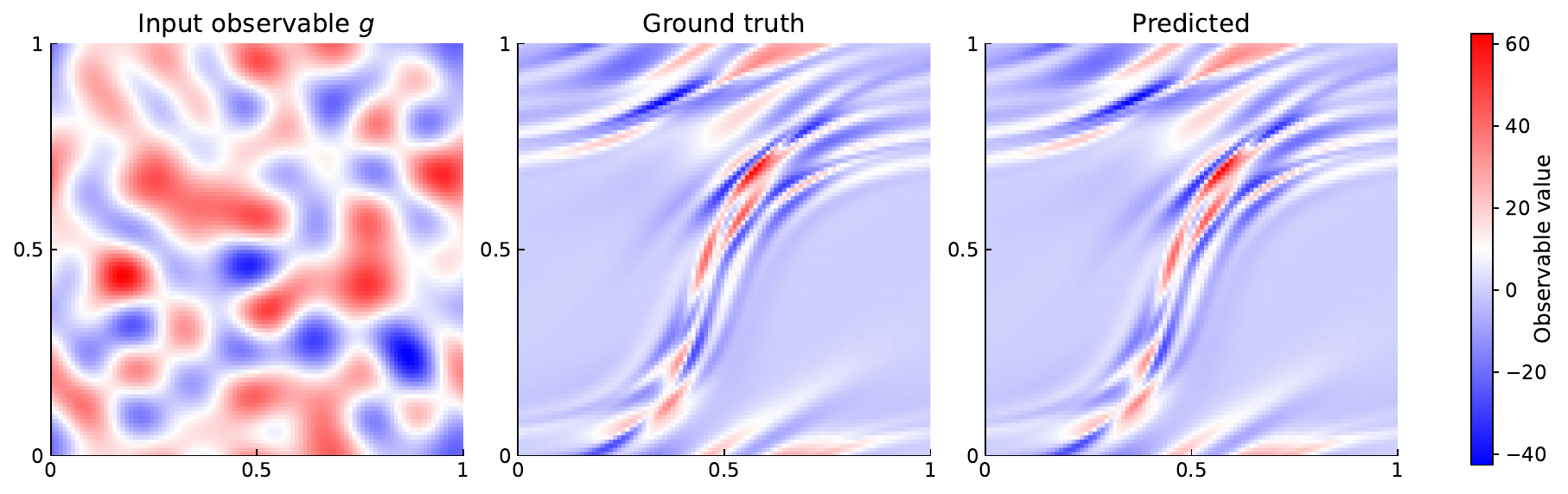}
\caption{Input and output for the Perron–Frobenius operator of the conjugated cat map. Left to right: (i) an unseen input observable $g$, (ii) ground-truth output, (iii) predicted output. The mean $L^{2}$ relative error over the test set is $1.736 \times 10^{-2}$.}
\label{fig:conjugated_input_output}
\end{figure}

The learned basis functions, shown in \cref{fig:conjugated_basis_functions}, exhibit clear adaptation to the geometry of the stable and unstable directions. As in the previous examples, the learned basis is approximately orthogonal; we omit the Gram matrix for brevity. Unlike the weakly nonlinear case in \cref{subsec:nonlinearly_cat_map}, where the basis functions display nearly linear striations approximately aligned with the linear cat map's eigendirections, the basis functions here show pronounced curvature reflecting the distortion introduced by the conjugacy $F$. This demonstrates SABON's ability to automatically discover basis functions adapted to the underlying dynamical geometry, even when that geometry is significantly more complex than the linear case.
\begin{figure}[H]
\centering
\includegraphics[width=1\linewidth]{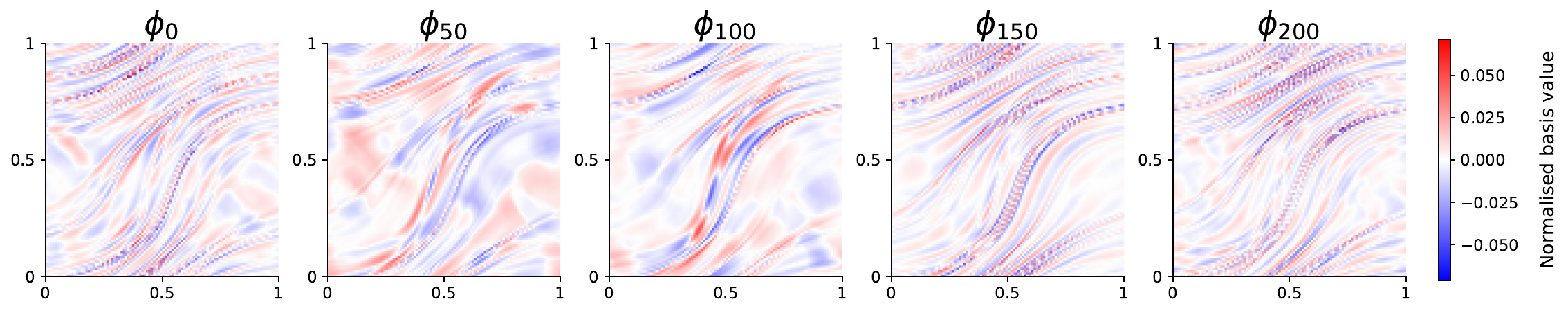}
\caption{Representative basis functions learned for the conjugated cat map. The curved structures reflect the nonlinear geometry of the stable and unstable directions induced by the conjugacy.}
\label{fig:conjugated_basis_functions}
\end{figure}

Solving the eigenequation \eqref{eq:eigenproblem} as in the previous examples, \cref{fig:conjugated_eigenvalues} shows the numerical eigenvalues of $\hat{\mathcal{L}}$. Despite the increased geometric complexity of the conjugated map, the spectral structure remains well-resolved: the leading eigenvalue lies close to unity, confirming the presence of an invariant measure, while the remaining eigenvalues are contained within a disk away from the unit circle.

To compare the quality of the SABON and Fourier bases independently of the learned operator, we compute the SRB measure as the leading eigenvector of the transfer operator matrix constructed using the true dynamics in both cases. That is, for each basis we project the true transfer operator onto the respective subspace and solve the resulting eigenequation. \cref{fig:conjugated_srb} shows that the SABON basis captures the fine-scale structure of the invariant measure more accurately than the Fourier basis, which provides a smoother approximation with less detail in the curved features induced by the conjugacy.

\begin{figure}[H] 
\centering 
\includegraphics[width=0.8\linewidth]{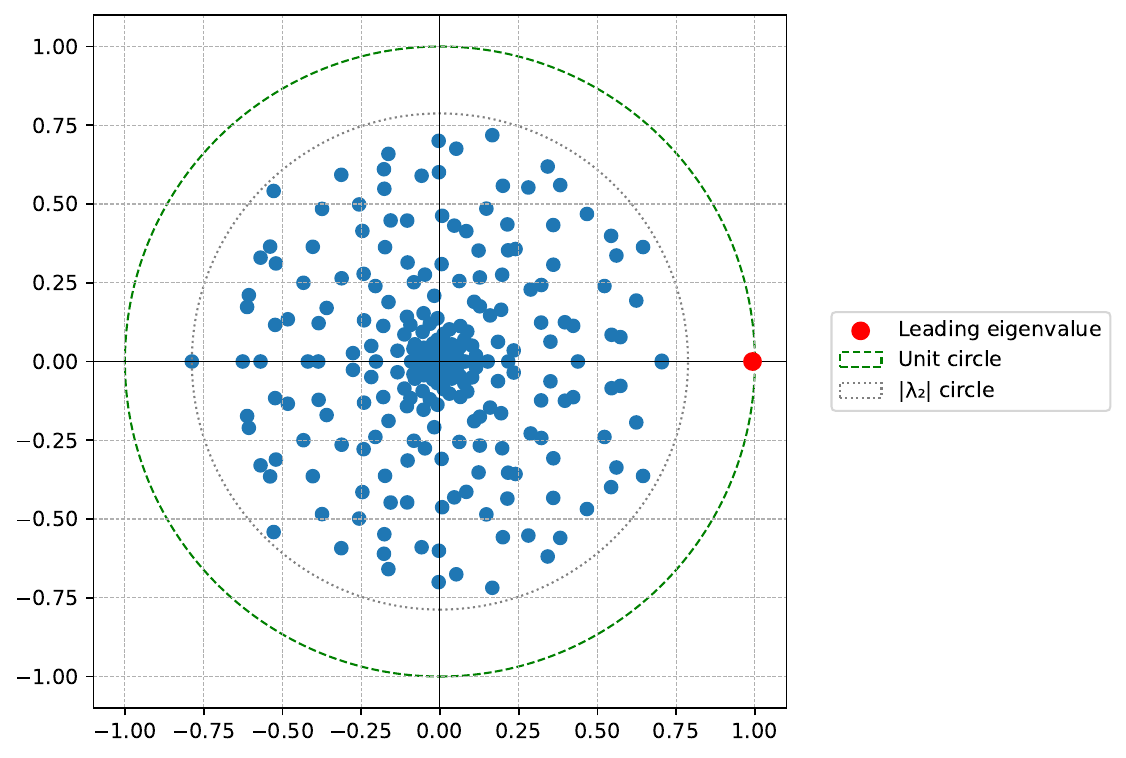} 
\caption{Eigenvalues of the learned Perron–Frobenius operator for the strongly nonlinear conjugated cat map. As in the weakly nonlinear case, the leading eigenvalue (corresponding to the SRB measure) is close to 1, and the remaining eigenvalues exhibit a spectral gap.} 
\label{fig:conjugated_eigenvalues} 
\end{figure}

\begin{figure}[H]
\centering
\includegraphics[width=1\linewidth]{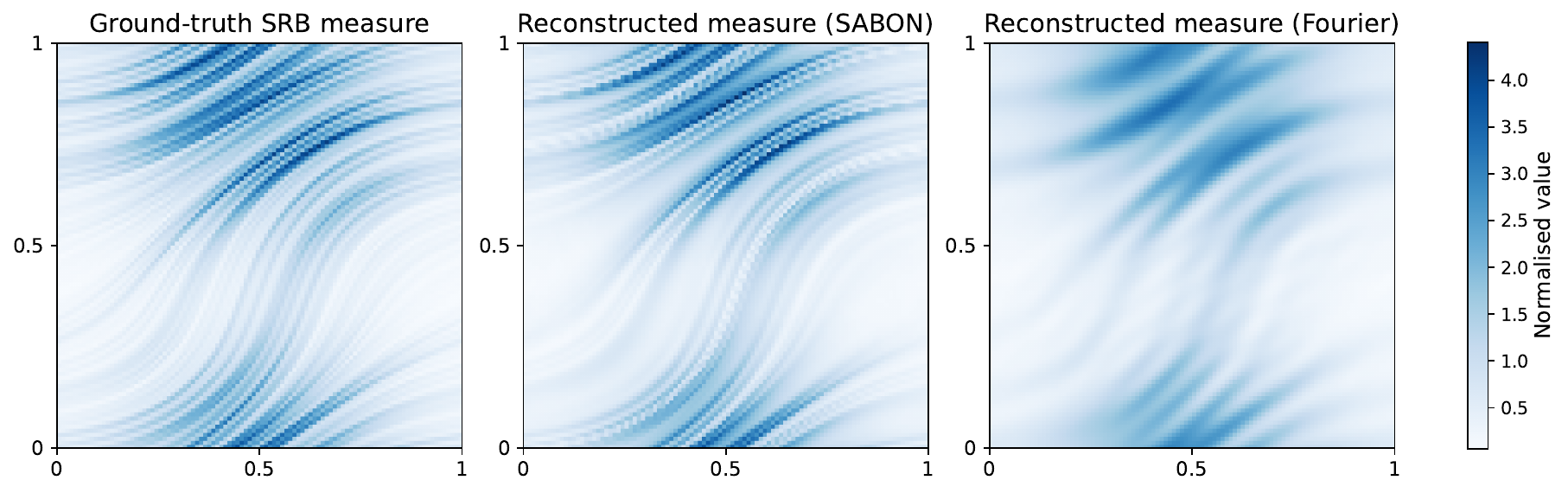}
\caption{Densities of ground-truth SRB measure for the conjugated map (left), its approximation using the SABON basis (middle), and using a Fourier basis of the same cardinality (right). In both cases, we use only the respective basis functions together with the true dynamics to solve the resulting eigenproblem, ensuring a fair comparison of basis quality. The densities are normalised to have unit $L^1(m)$ norm.}
\label{fig:conjugated_srb}
\end{figure}

Following the same procedure as in \cref{subsec:nonlinearly_cat_map}, we report the projection and approximation errors in \cref{tab:conjugated_projection_error}.

\begin{table}[H]
\caption{Approximation quality of the ground-truth SRB measure for the strongly nonlinear conjugated cat map. Projection error (\cref{eq:projection_error_l2,eq:projection_error_h1}) measures how well each basis can represent the SRB measure under orthogonal projection. Approximation error (\cref{eq:approximation_error_l2,eq:approximation_error_h1}) compares the estimated SRB measure $\mu_{\mathcal{B}}$, computed by solving ${\mathcal{L}_\mathcal{B}}\mu_{\mathcal{B}}=\mu_{\mathcal{B}}$, with the ground-truth SRB measure.}
\label{tab:conjugated_projection_error}
\centering
\begin{tabular}{|l|c|c|c|c|}
    \hline
    & \multicolumn{2}{c|}{Projection Error} & \multicolumn{2}{c|}{Approximation Error} \\
    \cline{2-5}
    Basis ($N=676$) & $L^{2, \mathrm{rel}}$ & $H^{-1, \mathrm{rel}}$ & $L^{2, \mathrm{rel}}$ & $H^{-1, \mathrm{rel}}$ \\ \hline
    SABON & $6.091\times10^{-2}$ & $1.466\times10^{-3}$ & $1.147\times10^{-1}$ & $6.225\times10^{-3}$ \\ \hline
    Fourier & $2.409\times10^{-1}$ & $1.095\times10^{-2}$ & $2.414\times10^{-1}$ & $1.132\times10^{-2}$ \\ \hline
\end{tabular}
\end{table}

Fourier modes have many strong approximation properties and are well suited to the periodic domains we have considered. Why then does the SABON basis consistently produce more accurate estimates of $\mu_{\rm SBR}$ than a Fourier basis?
Firstly, a Fourier element --- whether the complex exponential $e^{2\pi i k\cdot x}$ or the purely real sine–cosine products introduced in \cref{subsec:data_generation} --- is aligned with coordinate axes.
Secondly, the contracting dynamics of $T$ means that the frequency of a Fourier mode will be increased in stable directions of the dynamics under application of $\mathcal{L}$.
Thus, those modes in the Fourier basis with higher frequencies may be pushed far out of the original span of the Fourier modes, and made near-orthogonal to the Fourier basis.

The SABON basis, on the other hand, is learned jointly with the dynamics, so the network discovers functions that organise along the stable and unstable directions. The resulting modes resemble elongated ridges: smooth in the expanding direction and more sharply confined in the contracting one. 
When such a ridge is pushed forward by $\mathcal{L}$ it tends to become another ridge.
This anisotropic feature, absent from the isotropic Fourier family, accounts for the superior approximation quality reported in \cref{tab:projection_error_L2_H-1} and \cref{tab:conjugated_projection_error}.

A further crucial aspect of our learned SABON basis is its \textit{efficiency}.
As mentioned above, our learned basis functions tend to align with the stable and unstable directions arising from the dynamics.
This yields basis functions that have more rapid oscillation in stable directions and slower oscillation in unstable directions, which is exactly the form one expects for eigendistributions \cite{GL06}.
With a comparable number of basis functions, our learned functions can therefore more accurately represent the eigenprojectors. 

\section{Conclusions}
\label{sec:conclusions}
We introduced the Single Autoencoder Basis Operator Network
(SABON)—a neural framework that jointly learns
\begin{itemize}
\item an operator approximation $\hat{\mathcal{L}}$ of a Koopman or
Perron–Frobenius operator~$\mathcal L$ and 
\item a data-driven basis $\{\phi_j\}_{j=1}^{N}$ whose span
$V=\operatorname{span}\{\phi_j\}_{j=1}^{N}$ is adapted to the underlying dynamics and provides an efficient finite-dimensional representation for $\hat{\mathcal{L}}$ and other approximations of $\mathcal{L}$.
\end{itemize}
SABON cleanly achieves this by combining an encoding–reconstruction structure with a linear latent map~$\mathcal{G}$ and a composite loss that balances operator approximation accuracy and sparsity.
We proved a universal approximation theorem in the spirit of
Hua~\cite{HUA202321}, specialised to a single neural basis that targets transfer and Koopman operators.

Three canonical examples were used to highlight key features of SABON:

\begin{enumerate}
    \item \textbf{Circle rotation.} The SABON-learned basis accurately reconstructed the analytic eigenpairs and recovered the rotation angles~$k\alpha$, $k\in\mathbb{Z}$, directly from the leading eigenvalues, while the associated eigenfunctions coincided with the theoretical ${e^{\,i k\theta}}$ modes. Thus, the learned basis accurately captured the main dynamical properties. In this setting, where the training data lies in a finite-dimensional invariant subspace, SABON recovers an invariant basis.
    \item \textbf{Weakly nonlinear perturbed cat map.} For this uniformly hyperbolic map with approximately linear stable and unstable foliations, the learned basis adapted to the geometry of the dynamics and outperformed a Fourier basis of the same size when approximating the SRB measure.
    \item \textbf{Strongly nonlinear conjugated cat map.} For a more geometrically demanding Anosov map with curved foliations and significant deviation from area preservation, SABON continued to produce basis functions adapted to the nonlinear geometry and outperformed Fourier bases. This is particularly surprising given the well-known theoretical approximation strengths of Fourier modes. The network discovered basis functions largely aligned with the contracting and expanding directions, leading to more accurate estimates of the physical invariant measure of the dynamics.
\end{enumerate}

SABON opens up new possibilities for constructing compact and accurate transfer and Koopman operator representations, particularly in difficult situations where isotropic bases fail and data-adaptive bases may excel. Moreover, SABON is in principle extendable to high-dimensional or partially observed systems. Future theoretical challenges include deriving quantitative rates that relate network width, depth, and sparsity to the operator approximation error.

{\section*{Data availability}
All code and data supporting our findings is publicly available in our repository at \url{https://github.com/kevinkuhl/SABON}.}

\section*{Acknowledgments}

The authors thank Irteza Chaudhry for helpful discussions and for pointing out an error in an earlier version.
The research of GF is supported by an Australian Research Council (ARC) Laureate Fellowship FL230100088. 
The research of KK is supported by a UNSW University International Postgraduate Award, with additional funding from UNSW's School of Mathematics and Statistics and the ARC Laureate Fellowship. 
Computational resources were generously provided by Katana infrastructure at UNSW \cite{katana}, which significantly facilitated this research.

\bibliographystyle{siamplain}
\bibliography{references}
\end{document}